\newtheorem{theorem}{Theorem}[section]
\newtheorem{lemma}[theorem]{Lemma}
\newtheorem{proposition}[theorem]{Proposition}
\newtheorem{corollary}[theorem]{Corollary}
\newtheorem{definition}[theorem]{Definition}
\newtheorem{example}[theorem]{Example}
\numberwithin{equation}{section}
\DeclareMathOperator{\rad}{rad}
\DeclareMathOperator{\pd}{pd}
\DeclareMathOperator{\Hom}{Hom}
\DeclareMathOperator{\Ext}{Ext}
\DeclareMathOperator{\End}{End}
\DeclareMathOperator{\Ob}{Ob}
\DeclareMathOperator{\gldim}{gl.dim}
\DeclareMathOperator{\fdim}{fin.dim}
\DeclareMathOperator{\op}{op}
\title[A generalized Koszul theory]{A generalized Koszul theory and its relation to the classical theory}
\author{Liping Li}
\address{Department of Mathematics, University of California, Riverside, CA, 92507}
\email{lipingli@math.ucr.edu}
\begin{document}
\begin{abstract}
Let $A = \bigoplus _{i \geqslant 0} A_i$ be a graded locally finite $k$-algebra where $A_0$ is a finite-dimensional algebra whose finitistic dimension is 0. In this paper we develop a generalized Koszul theory preserving many classical results, and show an explicit correspondence between this generalized theory and the classical theory. Applications in representations of certain categories and extension algebras of standard modules of standardly stratified algebras are described.
\end{abstract}

\maketitle

\section{Introduction}

The classical Koszul theory plays an important role in the representation theory of graded algebras. However, there are a lot of structures (algebras, categories, etc) having natural gradings with non-semisimple degree 0 parts, to which the classical theory cannot apply. Particular examples of such structures include tensor algebras generated by non-semisimple algebras $A_0$ and $(A_0, A_0)$-bimodules $A_1$, extension algebras of finitely generated modules (among which we are most interested in extension algebras of standard modules of standardly stratified algebras \cite{Drozd, Li2}), graded modular skew group algebras, category algebras of finite EI categories \cite{Li4, Webb1, Webb2}, and certain graded $k$-linear categories. Therefore, it is reasonable to develop a generalized Koszul theory to study representations and homological properties of above structures.

In \cite{Green3,Madsen1,Madsen2,Woodcock} several generalized Koszul theories have been described, where the degree 0 part $A_0$ of a graded algebra $A$ is not required to be semisimple. In \cite{Woodcock}, $A$ is supposed to be both a left projective $A_0$-module and a right projective $A_0$-module. However, in many cases $A$ is indeed a left projective $A_0$-module, but not a right projective $A_0$-module. In Madsen's paper \cite{Madsen2}, $A_0$ is supposed to have finite global dimension. This requirement is too strong for us since in many applications $A_0$ is a self-injective algebra or a direct sum of local algebras, and hence $A_0$ has finite global dimension if and only if it is semisimple, falling into the framework of the classical theory. The theory developed by Green, Reiten and Solberg in \cite{Green3} works in a very general framework, and we want to find some conditions which are easy to check in practice. The author has already developed a generalized Koszul theory in \cite{Li1} under the assumption that $A_0$ is self-injective, and used it to study representations and homological properties of certain categories.

The goal of the work described in this paper is to loose the assumption that $A_0$ is self-injective (as required in \cite{Li1}) and replace it by a weaker condition so that the generalized theory can apply to more situations. Specifically, since we are interested in the extension algebras of modules, category algebras of finite EI categories, and graded $k$-linear categories for which the endomorphism algebra of each object is a finite dimensional local algebra, this weaker condition should be satisfied by self-injective algebras and finite dimensional local algebras. On the other hand, we also expect that many classical results as the Koszul duality can be preserved. Moreover, we hope to get a close relation between this generalized theory and the classical theory.

A trivial observation tells us that in the classical setup $A_0$ is semisimple if and only if $\gldim A_0$, the global dimension of $A_0$, is 0. Therefore, it is natural to consider the condition that $\fdim A_0$, the \textit{finitistic dimension} of $A_0$, is 0. Obviously, finite dimensional local algebras and self-injective algebras do have this property. It turns out that this weaker condition is suitable for our applications, and many classical results still hold.

Explicitly, let $A = \bigoplus _{i \geqslant 0} A_i$ be a graded \textit{locally finite} $k$-algebra generated in degrees 0 and 1, i.e., $\dim _k A_i < \infty$ and $A_1 \cdot A_i = A_{i+1}$ for all $i \geqslant 0$. We assume that both $\fdim A_0$ and $\fdim A_0^{\op}$ are 0, where $A_0^{\op}$ is the opposite algebra of $A_0$. We then define \textit{generalized Koszul modules} and \textit{generalized Koszul algebras} by linear projective resolutions, as people did for classical Koszul modules and classical Koszul algebras.

It is well known that in the classical theory \textit{linear modules} (defined by linear projective resolutions) and \textit{Koszul modules} (defined by a certain extension property) coincide. We have a similar result:

\begin{theorem}
Let $A = \bigoplus _{i \geqslant 0} A_i$ be a locally finite graded algebra with $\fdim A_0 = \fdim A_0^{\op} = 0$. If $A$ is a projective $A_0$-module, then a graded module $M$ is generalized Koszul if and only if it is a projective $A_0$-module and the graded $\Gamma = \Ext _A^{\ast} (A_0, A_0)$-module $\Ext _A^{\ast} (M, A_0)$ is generated in degree 0, i.e.,
\begin{equation*}
\Ext _{A}^1 (A_0, A_0) \cdot \Ext _{A}^i (M, A_0)=  \Ext _{A}^{i+1} (M, A_0).
\end{equation*}
\end{theorem}

We also have the generalized Koszul duality as follows:

\begin{theorem}
Let $A = \bigoplus _{i \geqslant 1} A_i$ be a locally finite graded algebra with $\fdim A_0 = \fdim A_0^{\op} = 0$. If $A$ is a generalized Koszul algebra, then $E = \Ext ^{\ast}_{A} (-, A_0)$ gives a duality between the category of generalized Koszul $A$-modules and the category of generalized Koszul $\Gamma = \Ext _A^{\ast} (A_0, A_0)$-modules. That is, if $M$ is a generalized Koszul $A$-module, then $E(M)$ is a generalized Koszul $\Gamma$-module, and $E_{\Gamma}EM = \Ext ^{\ast} _{\Gamma} (EM, \Gamma_0) \cong M$ as graded $A$-modules.
\end{theorem}

Let $\mathfrak{r}$ be the radical of $A_0$ and define $\mathfrak{R} = A \mathfrak{r} A$ to be the two-sided ideal generated by $\mathfrak{r}$.  For a graded $A$-module $M = \bigoplus _{i \geqslant 0} M_i$, we then define a quotient algebra $\bar{A} = A / A \mathfrak{r} A = \bigoplus _{i \geqslant 0} A_i / (A \mathfrak{r} A)_i$ and $\bar{M} = M / \mathfrak{R} M =  \bigoplus _{i \geqslant 0} M_i / (\mathfrak{R} M)_i$. Clearly, $\bar{M}$ is a graded $\bar{A}$-module, and the graded $A$-module $M$ is generated in degree 0 if and only if the corresponding graded $\bar{A}$-module $\bar{M}$ is generated in degree 0. Moreover, in the situation that $\mathfrak{r}A_1 = A_1 \mathfrak{r}$, we get the following correspondence between our generalized Koszul theory and the classical theory:

\begin{theorem}
Let $A = \bigoplus _{i \geqslant 1} A_i$ be a locally finite graded algebra with $\fdim A_0 = \fdim A_0^{\op} = 0$ and suppose $\mathfrak{r}A_1 = A_1\mathfrak{r}$. Then:
\begin{enumerate}
\item $A$ is a generalized Koszul algebra if and only it is a projective $A_0$-module and $\bar{A}$ is a classical Koszul algebra.
\item Suppose that $A$ is a projective $A_0$-module. A graded $A$-module $M$ is generalized Koszul if and only if it is a projective $A_0$-module and the corresponding grade $\bar{A}$-module $\bar{M}$ is classical Koszul.
\end{enumerate}
\end{theorem}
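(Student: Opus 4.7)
The plan is to reduce both statements to a careful comparison of minimal graded projective resolutions over $A$ with those over $\bar{A}$, exploiting the fact that $\bar{A}_0 = A_0/\mathfrak{r}$ is semisimple, so the classical Koszul theory applies on the $\bar{A}$ side. The guiding tool is the ``reduction functor'' $M \mapsto \bar{M} = M/\mathfrak{R}M$, which one has to show is exact and cover-preserving on the subcategory of graded $A$-modules that are $A_0$-projective.

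\emph{Preliminary reduction.} From $\mathfrak{r}A_1 = A_1\mathfrak{r}$ together with $A_i = A_1^i$, I would first check $\mathfrak{r}A_i = A_i\mathfrak{r}$ for every $i$. Consequently $(A\mathfrak{r}A)_i = \mathfrak{r}A_i = A_i\mathfrak{r}$ and $\bar{A}_i \cong (A_0/\mathfrak{r}) \otimes_{A_0} A_i$. For a graded $A$-module $M$ that is $A_0$-projective, the same identification gives $(\mathfrak{R}M)_i = \mathfrak{r}M_i$ and $\bar{M}_i \cong (A_0/\mathfrak{r}) \otimes_{A_0} M_i$, so reduction modulo $\mathfrak{R}$ is compatible with the grading and with tops on this subcategory.

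\emph{Projective covers ascend and descend.} Using the two finitistic-dimension hypotheses, which make $A_0/\mathfrak{r}$ behave as a semisimple detector of generators and direct summands of finitely generated projective $A_0$-modules, I would prove that for $M$ graded and $A_0$-projective, a graded projective surjection $\pi \colon P \twoheadrightarrow M$ is a projective cover over $A$ if and only if $\bar{\pi} \colon \bar{P} \twoheadrightarrow \bar{M}$ is a projective cover over $\bar{A}$. The nontrivial direction lifts a graded projective cover of $\bar{M}$ to a graded projective cover of $M$ whose source is again $A_0$-projective, for which the $A_0$-projectivity of $A$ is essential.

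\emph{Syzygies, induction, and reduction to classical Koszulity.} Given the minimal projective cover $0 \to \Omega M \to P \to M \to 0$ with $M$ and $A$ both $A_0$-projective, $P$ is $A_0$-projective, the sequence splits over $A_0$, and $\Omega M$ is $A_0$-projective. Reduction then preserves exactness because $\mathrm{Tor}^{A_0}_1(A_0/\mathfrak{r}, \Omega M) = 0$, giving $0 \to \overline{\Omega M} \to \bar{P} \to \bar{M} \to 0$. Since internal degrees of generators are preserved under reduction, the resolution of $M$ is linear over $A$ if and only if the induced resolution of $\bar{M}$ is linear over $\bar{A}$. Iterating gives part (2). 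Part (1) is the special case $M = A_0$, for which $\bar{M} = \bar{A}_0 = A_0/\mathfrak{r}$, combined with the fact established earlier in the paper that any generalized Koszul algebra is automatically $A_0$-projective.

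\emph{Main obstacle.} The principal difficulty is showing that the reduction functor is exact and that it both preserves and reflects minimality of graded projective covers on the subcategory of $A_0$-projective graded modules. This hinges on $\fdim A_0 = \fdim A_0^{\op} = 0$, which is what allows $A_0/\mathfrak{r}$ to substitute for a semisimple degree-$0$ part even though $A_0$ itself is not semisimple; maintaining the $A_0$-projectivity of every syzygy throughout the resolution in the locally finite (not necessarily finite-dimensional) setting is the delicate bookkeeping.
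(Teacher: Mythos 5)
Your proposal is correct and follows essentially the same route as the paper: reduce the minimal graded projective resolution modulo $\mathfrak{R}$, show the reduction is exact and cover-preserving on $A_0$-projective graded modules (the paper's Lemmas 3.3 and 3.4, using $\mathfrak{r}A_1 = A_1\mathfrak{r}$ to get $(\mathfrak{R}M)_i = \mathfrak{r}M_i$), detect degrees of generation via the quotient (Proposition 3.2), and then invoke Propositions 2.5 and 2.6 for the automatic $A_0$-projectivity in the ``only if'' directions. The only small difference is that the paper never needs to lift a projective cover from $\bar{A}$ back to $A$ (it just descends the minimal resolution of $M$, which is automatically the minimal resolution of $\bar{M}$), and the exactness of the reduction rests on degreewise $A_0$-splitting rather than on the finitistic-dimension hypothesis, which enters only through Propositions 2.5 and 2.6.
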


We then apply the generalized Koszul theory to some finite categories. They play an important role in the theory of finite groups and their representations. Except the correspondence established in the previous theorem, we give other correspondences in Section 4 (see Theorems 4.1 and 4.2), showing that the generalized Koszul property of the category algebras of such categories is equivalent to the classical Koszul property of quotient algebras of certain finite dimensional hereditary algebras. In practice, the latter one is much easier to check since it is not hard to construct graded projective resolutions for simple modules.

Extension algebras of standard modules of standardly stratified algebras have been widely studied in \cite{Abe, Agoston1, Agoston2, Drozd, Klamt, Li2, Madsen3, Mazorchuk1, Mazorchuk3, Mazorchuk4, Miemietz}. Different from the usual approach, we do not assume that the standardly stratified algebra is graded, the degree 0 part is semisimple, and its grading is compatible with the filtration by standard modules. Instead, using a combinatorial property of the filtration by standard modules, we show that the extension algebra of standard modules has generalized Koszul property.

The paper is organized as follows: In the next section we develop the generalized Koszul theory and prove the first two theorems. In Section 3 we describe the relation between the generalized theory and the classical theory, and prove the third theorem. Applications of this theory and the correspondence to certain categories are described in Section 4. In the last section we discuss the generalized Koszul property of extension algebras of standard modules.

We introduce some notation here. Throughout this paper $k$ is an algebraically closed field. Let $A = \bigoplus _{i \geqslant 0} A_i$ be a \textit{locally finite} graded algebra generated in degrees 0 and 1, i.e., $\dim _k A_i < \infty$ and $A_{i+1} = A_1 \cdot A_i$ for all $i \geqslant 0$. An $A$-module $M = \bigoplus _{i \geqslant 0} M_i$ is \textit{graded} if $A_i \cdot M_j \subseteq M_{i+j}$. It is said to be \textit{generated in degree $s$} if $M = A \cdot M_s$. It is \textit{locally finite} if $\dim _k M_i < \infty$ for all $i \geqslant 0$. In this paper all graded modules are supposed to be locally finite.

Given two graded $A$-modules $M$ and $N$, $\Hom_A (M, N)$ and $\hom_A (M, N)$ are the spaces of all module homomorphisms and of all graded module homomorphisms respectively. The composition of maps $f: L \rightarrow M$ and $g: M \rightarrow N$ is denoted by $gf$. The degree shift functor $[-]$ is defined by letting $M[i]_s = M_{s-i}$ for $i, s \in \mathbb{Z}$. Denote $\mathfrak{J} = \bigoplus _{i \geqslant 1} A_i$, which is a two-sided ideal of $A$. We identify $A_0$ with the quotient module $A/ \mathfrak{J}$ and view it as a graded $A$-module concentrated in degree 0. We view the zero module 0 as a projective module since this can simplify the expression of some statement.

\section{A generalized Koszul theory}

We start with some preliminary results, most of which are generalized from those described in \cite{BGS, Green1, Green2, Martinez, Mazorchuk2}. The reader is also suggested to look at other generalized Koszul theories described in \cite{Green3, Madsen1, Madsen2, Woodcock}.

The following lemmas are proved in \cite{Li1}, where we did not use the condition that $A_0$ is self-injective (Remark 2.8 in \cite{Li1}).

\begin{lemma}
(Lemma 2.1 in \cite{Li1}) Let $A$ be as above and $M$ be a graded $A$-module. Then:
\begin{enumerate}
\item $\mathfrak{J}$ is contained in the graded radical of $A$;
\item $M$ has a graded projective cover;
\item the graded syzygy $\Omega M$ is also locally finite.
\end{enumerate}
\end{lemma}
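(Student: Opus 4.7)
The plan is to establish the three parts in order, using standard graded-module techniques together with the local-finiteness hypothesis.

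For (1), the key observation is that every graded simple $A$-module $S$ is concentrated in a single degree. Indeed, letting $i$ be the smallest index with $S_i \neq 0$, the sum $\bigoplus_{j \geqslant i+1} S_j$ is a proper graded submodule of $S$, and simplicity forces it to vanish, so $S = S_i$. Then $\mathfrak{J} \cdot S \subseteq \bigoplus_{j > i} S_j = 0$, so $\mathfrak{J}$ annihilates every graded simple $A$-module. Since the graded Jacobson radical of $A$ is the intersection of the annihilators of the graded simple $A$-modules, $\mathfrak{J}$ lies inside it.

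For (2), the idea is to build a projective cover of $M$ from one of $M / \mathfrak{J} M$. Because $\mathfrak{J}$ annihilates $M/\mathfrak{J}M$, this quotient is a graded $A_0$-module; by local finiteness each of its graded pieces is a finite-dimensional $A_0$-module, hence admits an ordinary $A_0$-projective cover. Assembling these with appropriate degree shifts yields a graded $A_0$-projective cover $\pi_0 \colon P_0 \to M/\mathfrak{J}M$. I would then set $P = A \otimes_{A_0} P_0$, which is graded projective over $A$, and lift $\pi_0$ to a graded $A$-map $\pi \colon P \to M$ using $A_0$-projectivity of $P_0$ together with the surjection $M \twoheadrightarrow M / \mathfrak{J} M$. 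Surjectivity of $\pi$ is a graded Nakayama argument: letting $N = \operatorname{Im} \pi$, one has $N + \mathfrak{J} M = M$, and iterating gives $N + \mathfrak{J}^k M = M$ for every $k$; since $(\mathfrak{J}^k M)_i = 0$ whenever $k > i$, this forces $N_i = M_i$ in every degree. To see that $\pi$ is essential, note that $P / \mathfrak{J} P \cong P_0$ as graded $A_0$-modules, so any graded submodule $L \subseteq P$ with $\pi(L) = M$ satisfies $\pi_0(\bar{L}) = M/\mathfrak{J}M$, hence $\bar{L} = P_0$ by essentiality of $\pi_0$, i.e.\ $L + \mathfrak{J} P = P$; a second Nakayama iteration (using (1)) then gives $L = P$.

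For (3), local finiteness of $\Omega M$ is immediate from local finiteness of the projective cover $P$ constructed in (2). Since each graded piece of $P_0$ is finite-dimensional and $A$ is locally finite, the $n$-th graded piece $\bigoplus_{i + j = n} A_i \otimes_{A_0} (P_0)_j$ of $P$ is a finite sum of finite-dimensional spaces, so $P$ is locally finite; hence $\Omega M = \ker \pi$, a graded submodule of $P$, is locally finite.

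The main technical obstacle is the essentiality check in (2): one has to verify that the chosen lift $\pi$ has its kernel inside the graded radical of $P$, and not merely that $\pi$ is surjective. Everything else is routine bookkeeping with graded pieces together with the fact that $\mathfrak{J}$ sits in strictly positive degrees.
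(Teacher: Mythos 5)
The paper does not actually reprove this lemma --- it only cites Lemma 2.1 of \cite{Li1} --- but your argument is correct and follows the same standard route used there: graded simples are concentrated in a single degree (giving (1)); the graded projective cover is built as $A \otimes_{A_0} P_0$ from a degreewise $A_0$-projective cover of $M/\mathfrak{J}M$, with surjectivity and essentiality both handled by the graded Nakayama iteration, which is legitimate here because $M$ and $P$ are non-negatively graded and $\mathfrak{J}$ sits in strictly positive degrees. Local finiteness of $P_n = \bigoplus_{i+j=n} A_i \otimes_{A_0} (P_0)_j$, hence of $\Omega M \subseteq P$, then follows from local finiteness of $A$ and $M$, so (3) is also fine.
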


\begin{lemma}
(Lemma 2.2 in \cite{Li1}) Let $0 \rightarrow L \rightarrow M \rightarrow N \rightarrow$ be an exact sequence of graded $A$-modules. Then:
\begin{enumerate}
\item If $M$ is generated in degree $s$, so is $N$.
\item If $L$ and $N$ are generated in degree $s$, so is $M$.
\item If $M$ is generated in degree $s$, then $L$ is generated in degree $s$ if and only if $\mathfrak{J}M \cap L = \mathfrak{J}L$.
\end{enumerate}
\end{lemma}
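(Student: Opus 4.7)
The plan is to treat the three parts as increasing in difficulty, with parts (1) and (2) being routine degree-chasing through the short exact sequence, and part (3) reducing to a graded Nakayama argument that uses Lemma 2.1(1) (namely, that $\mathfrak{J}$ lies in the graded radical of $A$).

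For part (1), let $\pi\colon M\to N$ denote the surjection. If $M=A\cdot M_s$, then $N=\pi(M)=A\cdot\pi(M_s)\subseteq A\cdot N_s$, and the reverse inclusion is trivial. For part (2), observe first that if $L$ and $N$ are generated in degree $s$, then $L_t=N_t=0$ for $t<s$, so $M_t=0$ there as well. For $t\geqslant s$ and $m\in M_t$, the element $\pi(m)\in N_t=A_{t-s}\cdot N_s$ can be written as $\sum a_i\pi(n_i)$ with $a_i\in A_{t-s}$ and $n_i\in M_s$; then $m-\sum a_i n_i\in L_t=A_{t-s}\cdot L_s\subseteq A\cdot M_s$, hence $m\in A\cdot M_s$.

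Part (3) is the main obstacle, and here the strategy is to pass to the quotient modulo $\mathfrak{J}$. The identity $\mathfrak{J}M\cap L=\mathfrak{J}L$ is precisely the statement that the inclusion $L\hookrightarrow M$ induces an injection $L/\mathfrak{J}L\hookrightarrow M/\mathfrak{J}M$. The forward direction is then easy: if $L=A\cdot L_s$, any element of $\mathfrak{J}M\cap L$ lives in degree strictly greater than $s$, and for $t>s$ one has $L_t=A_{t-s}\cdot L_s\subseteq\mathfrak{J}L$, giving $\mathfrak{J}M\cap L\subseteq\mathfrak{J}L$; the other inclusion is tautological.

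For the reverse direction, assume $\mathfrak{J}M\cap L=\mathfrak{J}L$. Since $M=A\cdot M_s$, we have $M/\mathfrak{J}M$ concentrated in degree $s$, hence so is its submodule $L/\mathfrak{J}L$. Pick elements $l_i\in L_s$ whose images form a $k$-basis of $(L/\mathfrak{J}L)_s$, and let $L'=A\cdot\{l_i\}\subseteq L$. Then $L'+\mathfrak{J}L=L$ by construction, so $L/L'=\mathfrak{J}(L/L')$. Because Lemma 2.1(1) places $\mathfrak{J}$ inside the graded radical of $A$ and $L/L'$ is locally finite with a uniform lower bound on degrees, the graded Nakayama lemma forces $L/L'=0$, i.e.\ $L=A\cdot L_s$. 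The only subtlety to verify is that the graded Nakayama lemma really applies in this locally finite graded setting, but this is immediate once one checks degree by degree starting from the lowest nonzero degree of $L/L'$.
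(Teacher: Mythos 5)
Your proof is correct, and since the paper itself gives no argument here (it simply cites Lemma 2.2 of \cite{Li1}), there is nothing to diverge from: your degree-chasing for (1)--(2) and the reduction of (3) to injectivity of $L/\mathfrak{J}L \to M/\mathfrak{J}M$ followed by graded Nakayama is exactly the standard argument used in the cited source. The graded Nakayama step is legitimate here because all modules are non-negatively graded, so a lowest nonzero degree exists, as you note.
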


Now we define \textit{generalized Koszul modules} and \textit{generalized Koszul algebras}.

\begin{definition}
A graded $A$-module $M$ is called a generalized Koszul module if it has a (minimal) linear projective resolution
\begin{equation*}
\xymatrix{ \ldots \ar[r] & P^n \ar[r] & P^{n-1} \ar[r] & \ldots \ar[r] & P^0 \ar[r] & M \ar[r] & 0}
\end{equation*}
such that $P^i$ is generated in degree $i$ for all $i \geqslant 0$. The graded algebra $A$ is called a generalized Koszul algebra if $A_0$ viewed as an $A$-module is generalized Koszul.
\end{definition}

The reader can easily see that $M$ is a generalized Koszul $A$-module if and only if $M$ is generated in degree 0 and each syzygy $\Omega^i (M)$ is generated in degree $i$ for every $i \geqslant 1$. Moreover, from the above projective resolution, we deduce that $M_0 \cong P^0_0$ and $\Omega^i (M)_i \cong P^i_i$ are projective $A_0$-modules for all $i \geqslant 1$.

Recall for a finite dimensional algebra $\Lambda$, the \textit{finitistic dimension} $\fdim \Lambda$ is defined as the supremum of projective dimensions of all indecomposable $\Lambda$-modules having finite projective dimension (\cite{Bass, Jans}). In particular, if the global dimension $\gldim \Lambda$ is finite, then $\fdim \Lambda = \gldim \Lambda$. It is well known that $\fdim \Lambda = 0$ if $\Lambda$ is a finite dimensional local algebras or a self-injective algebra. The famous finitistic dimension conjecture asserts that the finitistic dimension of any finite dimensional algebra is finite.

From now on we assume that $\fdim A_0 = 0 = \fdim A_0^{\op}$. It is easy to see that this assumption is equivalent to the following splitting condition:\\

\textbf{(S): Every exact sequence $0 \rightarrow P \rightarrow Q \rightarrow R \rightarrow 0$ of left (right, resp.) $\Lambda$-modules splits if $P$ and $Q$ are left (right, resp.) projective $\Lambda$-modules.}\\

Indeed, from the short exact sequence we deduce that $\pd _{\Lambda} R \leqslant 1$. If $\fdim \Lambda = 0 = \fdim \Lambda ^{\op}$, then $\pd _{\Lambda} R = 0$, and hence the exact sequence splits. Conversely, suppose that every such exact sequence splits. If there is some $\Lambda$-module $M$ such that $\pd _{\Lambda} M = n \geqslant 1$, we consider $R = \Omega^{n-1} (M)$ and deduce that $\pd _{\Lambda} R = 1$. Consequently, the projective resolution of $R$ is non-splitting, contradicting the splitting condition.

\begin{proposition}
Let $0 \rightarrow L \rightarrow M \rightarrow N \rightarrow$ be an exact sequence of graded $A$-modules such that $L$ is generalized Koszul. Then $M$ is generalized Koszul if and only if $N$ is generalized Koszul.
\end{proposition}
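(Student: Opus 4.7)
The plan is to prove both directions simultaneously by induction on $i \geqslant 0$, establishing that the minimal syzygies fit into a short exact sequence
\[
0 \to \Omega^i L \to \Omega^i M \to \Omega^i N \to 0
\]
whose three terms are all generated in degree $i$. The essential input is the observation already recorded after Definition 2.3: for a generalized Koszul module $X$ the degree-$i$ part $(\Omega^i X)_i$ is a projective $A_0$-module. Combined with the splitting condition (S), this will guarantee that the horseshoe construction produces minimal projective covers without spurious summands, which is what makes the induction run. The two cases ($L, N$ generalized Koszul vs.\ $L, M$ generalized Koszul) are handled symmetrically.

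For the base case, suppose first that $L$ and $N$ are generalized Koszul; then $M$ is generated in degree $0$ by Lemma 2.2(2), and since $N_0$ is a projective $A_0$-module the sequence $0 \to L_0 \to M_0 \to N_0 \to 0$ splits, yielding $M_0 \cong L_0 \oplus N_0$. Suppose instead that $L$ and $M$ are generalized Koszul; then $N$ is generated in degree $0$ by Lemma 2.2(1), and (S) applied to the projective $A_0$-modules $L_0$ and $M_0$ splits the same sequence, so $N_0$ is a projective $A_0$-summand of $M_0$. Either way the degree-$0$ sequence splits as $A_0$-modules.

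For the inductive step, assume $0 \to \Omega^i L \to \Omega^i M \to \Omega^i N \to 0$ is exact with all three terms generated in degree $i$ and with the degree-$i$ parts of the two known-Koszul modules projective $A_0$-modules. Exactly as in the base case, the sequence $0 \to (\Omega^i L)_i \to (\Omega^i M)_i \to (\Omega^i N)_i \to 0$ splits; choosing the horseshoe lifts compatibly with this splitting, the horseshoe map $P_L \oplus P_N \to \Omega^i M$ (where $P_L$, $P_N$ are minimal projective covers of $\Omega^i L$, $\Omega^i N$) restricts to an isomorphism in degree $i$. Hence $P_L \oplus P_N \twoheadrightarrow \Omega^i M$ is a \emph{minimal} projective cover, its kernel is $\Omega^{i+1} M$, and the horseshoe yields $0 \to \Omega^{i+1} L \to \Omega^{i+1} M \to \Omega^{i+1} N \to 0$; two of these terms are generated in degree $i+1$ by hypothesis, and Lemma 2.2(1) or (2) supplies generation in degree $i+1$ for the remaining term. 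The principal obstacle is precisely this minimality claim: without it the horseshoe would deliver $\Omega^{i+1} M \oplus Q$ for some projective $Q$ generated in degree $i$, which would wreck the degree-generation bookkeeping at the next stage. This is where the hypothesis $\fdim A_0 = \fdim A_0^{\op} = 0$, via the splitting condition (S), is indispensable.
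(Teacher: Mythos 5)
Your proposal is correct and follows essentially the same route as the paper: the paper's $3\times 3$ diagram with projective covers $P$ and $Q$ of $L$ and $N$ is exactly your horseshoe, its key step is the same splitting of the degree-$0$ (respectively degree-$i$) sequence via projectivity of $(\Omega^i X)_i$ for the known-Koszul terms together with condition (S), and its ``replace by shifted syzygies and recurse'' is your induction on $i$. The only difference is presentational: you make the minimality of $P_L \oplus P_N$ as a projective cover of $\Omega^i M$ explicit, which the paper phrases as $M_0 \cong L_0 \oplus N_0$ implying $P \oplus Q$ is a graded projective cover of $M$.
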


\begin{proof}
This is Proposition 2.9 in \cite{Li1}. The proof is almost the same except replacing the self-injective property of $A_0$ by the splitting property (S). For the sake of completeness we give a brief proof here.

By the second statement of the previous lemma, $M$ is generated in degree 0 if and only if $N$ is generated in degree 0. Consider the following diagram in which all rows and columns are exact:
\begin{align*}
\xymatrix{
& 0 \ar[d] & 0 \ar[d] & 0 \ar[d] &  \\
0 \ar[r] & \Omega L \ar[r] \ar[d] & M' \ar[r] \ar[d] & \Omega N \ar[r] \ar[d] & 0\\
0 \ar[r] & P \ar[r] \ar[d] & P \oplus Q \ar[r] \ar[d] & Q \ar[r] \ar[d] & 0 \\
0 \ar[r] & L \ar[r] \ar[d] & M \ar[r] \ar[d] & N \ar[r] \ar[d] & 0 \\
& 0 & 0 & 0. &
}
\end{align*}
Here $P$ and $Q$ are graded projective covers of $L$ and $N$ respectively. We claim $M' \cong \Omega M$. Indeed, the given exact sequence induces an exact sequence of $A_0$-modules:
\begin{align*}
\xymatrix { 0 \ar[r] & L_0 \ar[r] & M_0 \ar[r] & N_0 \ar[r] & 0.}
\end{align*}
Observe that $L_0$ is a projective $A_0$-module. If $N$ is generalized Koszul, then $N_0$ is a projective $A_0$-module since $N_0 \cong Q^0_0$, and the above sequence splits. If $M$ is generalized Koszul, then $M_0$ is a projective $A_0$-module, and this sequence splits as well by the splitting property (S). In either case we have $M_0 \cong L_0 \oplus N_0$. Thus $P \oplus Q$ is a graded projective cover of $M$, and hence $M' \cong \Omega M$ is generated in degree 1 if and only if $\Omega N$ is generated in degree 1 by Lemma 2.2. Replace $L$, $M$ and $N$ by $(\Omega L)[-1]$, $(\Omega M)[-1]$ and $(\Omega N)[-1]$ (all of them are generalized Koszul) respectively in the short exact sequence. Repeating the above procedure we prove the conclusion by recursion.
\end{proof}

If $M$ is a generalized Koszul module, its truncations (with suitable degree shifts) are generalized Koszul as well:

\begin{proposition}
Let $A$ be a generalized Koszul algebra and $M$ be a generalized Koszul module. Then $\mathfrak{J}^i M[-i]$ is also generalized Koszul for each $i \geqslant 1$.
\end{proposition}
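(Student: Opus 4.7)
The plan is to induct on $i$, with all the essential work occurring in the base case $i = 1$. Let $P^0$ be the graded projective cover of $M$. Since $M$ is generalized Koszul, $\Omega M$ is generated in degree $1$ and therefore $\Omega M \subseteq \mathfrak{J}P^0$; quotienting the sequence $0 \to \Omega M \to P^0 \to M \to 0$ against $0 \to \mathfrak{J}P^0 \to P^0 \to P^0_0 \to 0$ (noting $P^0_0 \cong M_0$ because $P^0 \to M$ is essential) yields a short exact sequence
\[
0 \to \Omega M \to \mathfrak{J}P^0 \to \mathfrak{J}M \to 0.
\]
Shifting by $[-1]$ places all three terms in the category of modules generated in degree $0$, so the strategy is to verify the left two terms are generalized Koszul and then apply Proposition 2.5 to deduce the same for $\mathfrak{J}M[-1]$.

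For $\Omega M[-1]$, truncating the linear projective resolution of $M$ and shifting by $[-1]$ immediately yields a linear projective resolution. For $\mathfrak{J}P^0[-1]$, I would first argue that $P^0_0 \cong M_0$, regarded as an $A$-module concentrated in degree $0$, is itself generalized Koszul: it is a projective $A_0$-module, hence a direct summand of some $A_0^n$, and the $A_0$-linear splitting is automatically $A$-linear since $\mathfrak{J}$ acts trivially on both modules. Because $A$ is generalized Koszul, $A_0^n$ is a generalized Koszul $A$-module, and uniqueness of graded projective covers shows directly that its direct summands inherit the property. Thus $P^0_0$ is generalized Koszul. Next, one checks that $P^0$ is the graded projective cover of $P^0_0$ as an $A$-module (both are generated in degree $0$ with $P^0_0$ projective over $A_0$), so $\mathfrak{J}P^0 = \Omega(P^0_0)$, and the truncate-and-shift argument applied to $P^0_0$ then gives that $\mathfrak{J}P^0[-1] = \Omega(P^0_0)[-1]$ is generalized Koszul.

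For the inductive step, applying the base case to the generalized Koszul module $N := \mathfrak{J}^{i-1}M[-(i-1)]$ yields that $\mathfrak{J}N[-1]$ is generalized Koszul; the bookkeeping $\mathfrak{J}N[-1] = (\mathfrak{J} \cdot \mathfrak{J}^{i-1}M)[-i] = \mathfrak{J}^i M[-i]$ closes the induction. The main obstacle I expect is the identification $\mathfrak{J}P^0 = \Omega(P^0_0)$ as $A$-modules, which hinges on verifying that $P^0$ really is the graded projective cover of $P^0_0$ viewed as an $A$-module concentrated in degree $0$; this requires understanding the graded top of $P^0_0$ as an $A$-module, namely $P^0_0/\mathfrak{r}P^0_0$ where $\mathfrak{r} = \rad(A_0)$.
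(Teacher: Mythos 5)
Your proof is correct and is essentially the paper's own argument: your sequence $0 \to \Omega M \to \mathfrak{J}P^0 \to \mathfrak{J}M \to 0$ coincides with the paper's $0 \to \Omega M \to \Omega(M_0) \to \mathfrak{J}M \to 0$ (the identification $\mathfrak{J}P^0 = \Omega(P^0_0)$ that you flag as the main obstacle is immediate, since $\mathfrak{J}P^0 \subseteq \rad P^0$ by Lemma 2.1(1) makes $P^0 \twoheadrightarrow P^0_0$ a graded projective cover), and both arguments conclude by the two-out-of-three property for short exact sequences (Proposition 2.4, not 2.5) followed by the same recursion. The details you supply --- that $M_0$ is an $A$-linear summand of $A_0^n$ and that summands of generalized Koszul modules are generalized Koszul --- are exactly what the paper leaves implicit in asserting that $\Omega(M_0)[-1]$ is generalized Koszul.
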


\begin{proof}
This is Proposition 2.13 in \cite{Li1}. For the convenience of the reader we include a brief proof here. Consider the following commutative diagram:
\begin{align*}
\xymatrix{
& 0 \ar[r] & \Omega M \ar[r] \ar[d] & \Omega(M_0) \ar[r] \ar[d] & \mathfrak{J}M \ar[r] & 0 \\
& 0 \ar[r] & P^0 \ar[r]^{id} \ar[d] & P^0 \ar[r] \ar[d] & 0 & \\
0 \ar[r] & \mathfrak{J} M \ar[r] & M \ar[r] & M_0 \ar[r] & 0 &}
\end{align*}
Since $M_0$ is a projective $A_0$-module and $A_0$ is generalized Koszul, $\Omega(M_0)[-1]$ is also generalized Koszul. Similarly, $\Omega M[-1]$ is generalized Koszul since so is $M$. Therefore, $\mathfrak{J} M[-1]$ is generalized Koszul by the previous proposition. Now replacing $M$ by $\mathfrak{J} M[-1]$ and using recursion, we conclude that $\mathfrak{J} ^iM[-i]$ is a generalized Koszul $A$-module for every $i \geqslant 1$.
\end{proof}

From this proposition we immediately deduce that if $A$ is a generalized Koszul algebra, then it is a projective $A_0$-module. We now focus on graded algebras with this property.

\begin{proposition}
If $A$ is a projective $A_0$-module, then every generalized Koszul module $M$ is a projective $A_0$-module.
\end{proposition}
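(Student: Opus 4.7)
The plan is to reduce the question to a degree-by-degree statement: if each graded piece $M_j$ is projective over $A_0$, then so is $M = \bigoplus_{j \geqslant 0} M_j$, because the grading decomposition is a decomposition of left $A_0$-modules (the algebra $A_0$ sits in degree $0$ and thus respects the grading).

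First I would exploit the linear projective resolution $\cdots \to P^1 \to P^0 \to M \to 0$ guaranteed by generalized Koszulity, in which $P^i$ is generated in degree $i$ and the remark after Definition 2.3 identifies $P^i_i$ with a projective $A_0$-module. Any graded projective $A$-module generated in a single degree is (up to shift) of the form $A \otimes_{A_0} V$ for a projective $A_0$-module $V$, so $P^i$ is a direct summand of a graded free module $A^{n_i}[-i]$ for some $n_i$. Since $A$ is projective as a left $A_0$-module by hypothesis, each such free module is $A_0$-projective, and therefore so is $P^i$. Taking the degree-$j$ part, $P^i_j$ is an $A_0$-summand of $P^i$, hence again projective over $A_0$.

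Next, since the resolution is exact as a complex of graded $A$-modules, passing to the degree-$j$ component preserves exactness; and because $P^i$ is concentrated in degrees $\geqslant i$, the terms $P^i_j$ vanish for $i>j$, leaving a finite projective resolution
\[
0 \to P^j_j \to P^{j-1}_j \to \cdots \to P^0_j \to M_j \to 0
\]
of $M_j$ over $A_0$ of length at most $j$. Thus $\pd_{A_0} M_j < \infty$, and since $M_j$ is finite-dimensional (hence a finitely generated $A_0$-module) while $\fdim A_0 = 0$, we conclude $\pd_{A_0} M_j = 0$; that is, $M_j$ is a projective $A_0$-module, which finishes the argument. The one subtle point to verify carefully is the identification $P^i \cong A \otimes_{A_0} P^i_i[-i]$ transferring $A$-projectivity over $A_0$ to $P^i$; once this is in hand, the rest is a routine application of the finitistic dimension hypothesis.
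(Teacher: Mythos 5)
Your argument is correct and is essentially the proof in the paper: both restrict the linear resolution to a fixed degree $j$, observe that the degree-$j$ strand terminates at $P^j_j$ because each $P^i$ is generated in degree $i$ (and each $P^i_j$ is $A_0$-projective since $A$ is), and then use the hypothesis on $A_0$ to conclude that $M_j$ is projective. The only cosmetic difference is that you invoke $\fdim A_0 = 0$ directly on the resulting finite $A_0$-projective resolution of $M_j$, whereas the paper runs a downward induction on the syzygies, splitting the short exact sequences $0 \rightarrow \Omega^{s+1}(M)_j \rightarrow P^s_j \rightarrow \Omega^s(M)_j \rightarrow 0$ one at a time via the equivalent splitting condition (S).
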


\begin{proof}
Clearly, it suffices to show that $M_i$ is a projective $A_0$-module for each $i \geqslant 0$. Since $M$ is generalized Koszul, $M_0$ is a projective $A_0$-module. Now suppose $i \geqslant 1$. The minimal linear projective resolution of $M$ gives rise to exact sequences of $A_0$-modules:
\begin{equation*}
\xymatrix{0 \ar[r] & \Omega^{s+1} (M)_i \ar[r] & P^s_i \ar[r] & \Omega^s (M)_i \ar[r] & 0,} \quad 0 \leqslant s \leqslant i.
\end{equation*}
If $s = i$, we have $\Omega^{i+1} (M)_i = 0$ since $\Omega^{i+1} (M)$ is generated in degree $i+1$. Thus $\Omega^i (M)_i \cong P^i_i$ is a projective $A_0$-module. Now let $s = i-1$. We claim that the first term $\Omega^i (M)_i$ is a projective $A_0$-module. Indeed, $\Omega^i (M) [-i]$ is a generalized Koszul module, so $(\Omega^i (M) [-i])_0$ is a projective $A_0$-module. But $\Omega^i (M)_i \cong (\Omega^i (M) [-i])_0$. This proves the claim. Since the first two terms are projective $A_0$-modules, by the splitting property (S), we deduce that $\Omega ^{i-1} (M)_i$ is a projective $A_0$-module. By recursion, we conclude that $M_i$ is a projective $A_0$-module for every $i >0$.
\end{proof}

The following lemma will be used in the proof of Theorem 1.1.

\begin{lemma}
Let $M$ be a graded $A$-module generated in degree 0. Suppose that both $A$ and $M$ are projective $A_0$-modules. Then $\Omega M$ is generated in degree 1 if and only if every $A$-module homomorphism $\Omega M \rightarrow A_0$ extends to an $A$-module homomorphism $\mathfrak{J} P \rightarrow A_0$, where $P$ is a graded projective cover of $M$.
\end{lemma}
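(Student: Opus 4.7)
The plan is to recast the extension problem in terms of $A_0$-module maps on tops. For any graded $A$-module $X$, an $A$-linear map $X \to A_0$ kills $\mathfrak{J}X$, giving $\Hom_A(X, A_0) \cong \Hom_{A_0}(X/\mathfrak{J}X, A_0)$. Since $\mathfrak{J}P$ is generated in degree $1$, its top is $\mathfrak{J}P/\mathfrak{J}^2P = P_1$, so $\Hom_A(\mathfrak{J}P, A_0) \cong \Hom_{A_0}(P_1, A_0)$. Writing $T := \Omega M/\mathfrak{J}\Omega M$ and using that $A$ is generated in degree $1$, we obtain $T = T_1 \oplus \bigoplus_{i \geqslant 2} T_i$ with $T_1 = (\Omega M)_1$ and $T_i = (\Omega M)_i/A_1(\Omega M)_{i-1}$ for $i \geqslant 2$. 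Under these identifications the restriction map of the lemma becomes precomposition with the natural map $T \to P_1$, which is the inclusion $(\Omega M)_1 \hookrightarrow P_1$ on $T_1$ and zero on every $T_i$ with $i \geqslant 2$.

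For the forward direction, $\Omega M$ generated in degree $1$ reduces $T$ to $T_1 = (\Omega M)_1$. Both $P_1 = A_1 \otimes_{A_0} M_0$ and $(\Omega M)_1$ are projective $A_0$-modules: $P_1$ is projective because $A$ and $M$ are projective over $A_0$ (so $A_1$ and $M_0$ are), and then (S) applied to $0 \to (\Omega M)_1 \to P_1 \to M_1 \to 0$ makes $(\Omega M)_1$ a direct summand of $P_1$. A second application of (S) splits the inclusion $(\Omega M)_1 \hookrightarrow P_1$, so every $A_0$-hom $(\Omega M)_1 \to A_0$ extends to $P_1 \to A_0$. This in turn produces an $A$-linear extension $\mathfrak{J}P \to A_0$ by extending by zero on $\mathfrak{J}^2 P$.

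For the converse, the hypothesis forces every $\bar{f} \in \Hom_{A_0}(T, A_0)$ to factor through $T \to P_1$; since this map kills $T_i$ for every $i \geqslant 2$, we deduce $\Hom_{A_0}(T_i, A_0) = 0$ for all such $i$. To conclude $T_i = 0$ (which is exactly $\Omega M$ being generated in degree $1$), I plan to show each $T_i$ is itself a projective $A_0$-module; then $T_i \neq 0$ would make $T_i$ a summand of some $A_0^n$ and hence admit a nonzero $A_0$-valued map, a contradiction. Projectivity of $T_i$ should follow from combining the splittings furnished by (S) on $0 \to (\Omega M)_k \to P_k \to M_k \to 0$ for $k = i-1, i$ with the multiplication surjection $A_1 \otimes_{A_0} P_{i-1} \twoheadrightarrow P_i$, and a final application of (S) to split off $A_1(\Omega M)_{i-1}$ inside $(\Omega M)_i$.

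The main obstacle is this last step: one must verify that $A_1(\Omega M)_{i-1}$ is itself a projective $A_0$-module so that (S) applies. Although $A_1 \otimes_{A_0} (\Omega M)_{i-1}$ is projective (using $A$ projective over $A_0$ and $(\Omega M)_{i-1}$ projective), the multiplication map into $(\Omega M)_i$ need not be injective, so a careful diagram chase manipulating the two splittings and the multiplication, together with repeated use of the splitting property (S), is required to conclude.
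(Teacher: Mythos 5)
Your reduction of the problem is correct and is essentially the paper's: both identify $\Hom_A(\mathfrak{J}P,A_0)$ with $\Hom_{A_0}(P_1,A_0)$ and $\Hom_A(\Omega M,A_0)$ with $\Hom_{A_0}(T,A_0)$ for $T=\Omega M/\mathfrak{J}\Omega M$, and your forward direction is complete. (One small slip there: the splitting of $0\rightarrow(\Omega M)_1\rightarrow P_1\rightarrow M_1\rightarrow 0$ follows from the projectivity of $M_1$ alone; condition (S) as stated needs the first two terms to be projective, and the projectivity of $(\Omega M)_1$ is only known after the splitting.) The genuine gap is in the converse. You correctly observe that surjectivity of the restriction map forces $\Hom_{A_0}(T_i,A_0)=0$ for $i\geqslant 2$ and that one must deduce $T_i=0$, but your proposed mechanism --- proving each $T_i=(\Omega M)_i/A_1(\Omega M)_{i-1}$ is a projective $A_0$-module --- is left unfinished, and the obstacle you name is real: $A_1(\Omega M)_{i-1}$ is only the image of the multiplication map $A_1\otimes_{A_0}(\Omega M)_{i-1}\rightarrow(\Omega M)_i$, images of maps between projective $A_0$-modules need not be projective when $\fdim A_0=0$ (multiplication by $t$ on $k[t]/(t^2)$ has image the simple module), and (S) gives no splitting because neither that image nor the kernel of the multiplication map is known to be projective. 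So the plan as described does not close.

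The paper's proof of the converse does not go through projectivity of $T_i$. It compares the split exact sequence $0\rightarrow\Hom_{A_0}(M_1,A_0)\rightarrow\Hom_{A_0}(P_1,A_0)\rightarrow\Hom_{A_0}((\Omega M)_1,A_0)\rightarrow 0$ with $0\rightarrow\Hom_A(\mathfrak{J}M,A_0)\rightarrow\Hom_A(\mathfrak{J}P,A_0)\rightarrow\Hom_A(\Omega M,A_0)\rightarrow 0$ and reads off that the extension property holds exactly when the third terms agree, i.e.\ when $\Hom_{A_0}(\Omega M/\mathfrak{J}\Omega M,A_0)=\Hom_{A_0}((\Omega M)_1,A_0)$; the passage from this equality of Hom-spaces back to $\Omega M/\mathfrak{J}\Omega M=(\Omega M)_1$ is precisely where the hypotheses on $A_0$ enter (in the precursor, Lemma 2.17 of \cite{Li1}, this is immediate because $A_0$ is self-injective, hence a cogenerator, so $\Hom_{A_0}(-,A_0)$ reflects zero). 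To finish your argument you should therefore prove directly that $\Hom_{A_0}(N,A_0)=0$ forces $N=0$ for the modules $N=T_i$ arising here --- equivalently, that the simple quotients of $T_i$ embed into $A_0$ --- rather than aiming at projectivity of $T_i$, which is both stronger than needed and apparently out of reach by repeated use of (S).
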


\begin{proof}
This is a varied version of Lemma 2.17 in \cite{Li1}. The exact sequence $0 \rightarrow \Omega M \rightarrow P \rightarrow M \rightarrow 0$ induces an exact sequence $
0 \rightarrow (\Omega M)_1 \rightarrow P_1 \rightarrow M_1 \rightarrow 0$ of $A_0$-modules, which splits since $M_1$ is a projective $A_0$-module. Applying the functor $\Hom_{A_0} (-, A_0)$ we get another split exact sequence
\begin{equation*}
0 \rightarrow \Hom _{A_0} (M_1, A_0) \rightarrow \Hom _{A_0} (P_1, A_0) \rightarrow \Hom _{A_0} ((\Omega M)_1, A_0) \rightarrow 0.
\end{equation*}
Note that $(\Omega M)_0 =0$. Therefore, $\Omega M$ is generated in degree 1 if and only if $\Omega M / \mathfrak{J} (\Omega M) \cong (\Omega M)_1$, if and only if the above sequence is isomorphic to
\begin{equation*}
0 \rightarrow \Hom _{A_0} (M_1, A_0) \rightarrow \Hom _{A_0} (P_1, A_0) \rightarrow \Hom _{A_0} (\Omega M / \mathfrak{J} \Omega M, A_0) \rightarrow 0.
\end{equation*}
Here we use the fact that $M_1$, $P_1$ and $(\Omega M)_1$ are projective $A_0$-modules. But the above sequence is isomorphic to
\begin{equation*}
0 \rightarrow \Hom _A (\mathfrak{J}M, A_0) \rightarrow \Hom _A (\mathfrak{J} P, A_0) \rightarrow \Hom _A (\Omega M, A_0) \rightarrow 0
\end{equation*}
since $\mathfrak{J}M$ and $\mathfrak{J} P$ are generated in degree 1. Therefore, $\Omega M$ is generated in degree 1 if and only if every (non-graded) $A$-module homomorphism $\Omega M \rightarrow A_0$ extends to a (non-graded) $A$-module homomorphism $\mathfrak{J} P \rightarrow A_0$.
\end{proof}

Let $M$ be a graded $A$-module and $\Gamma = \Ext _A^{\ast} (A_0, A_0)$. Then $\Ext _A^{\ast} (M, A_0)$ is a graded $\Gamma$-module. Now we restate and prove Theorem 1.1.

\begin{theorem}
Let $A = \bigoplus _{i \geqslant 0} A_i$ be a locally finite graded algebra with $\fdim A_0 = \fdim A_0^{\op} = 0$. If $A$ is a projective $A_0$-module, then a graded module $M$ is generalized Koszul if and only if it is a projective $A_0$-module and the graded $\Gamma = \Ext _A^{\ast} (A_0, A_0)$-module $\Ext _A^{\ast} (M, A_0)$ is generated in degree 0, i.e.,
\begin{equation*}
\Ext _{A}^1 (A_0, A_0) \cdot \Ext _{A}^i (M, A_0)=  \Ext _{A}^{i+1} (M, A_0).
\end{equation*}
\end{theorem}

\begin{proof}
This is a varied version of Theorem 2.16 in \cite{Li1}. Since the proof is almost the same, we only give a sketch. Please refer to \cite{Li1} for details.

\textbf{The only if part.} Let $M$ be a generalized Koszul $A$-module. Without loss of generality we can suppose that $M$ is indecomposable. By Lemma 2.6 $M$ is a projective $A_0$-module. As in the original proof, it suffices to show that the given identity is true for $i = 1$, i.e.,
\begin{equation*}
\Ext _{A}^1 (M, A_0) = \Ext _{A}^1 (A_0, A_0) \cdot \Hom _{A} (M, A_0).
\end{equation*}
The proof of this identity is completely the same as the original proof. We omit the details.

\textbf{The if part.} As in the original proof, we only need to show that $\Omega M$ is generated in degree 1. By the previous lemma, it suffices to show that each (non-graded) $A$-module homomorphism $g: \Omega M \rightarrow A_0$ extends to $\mathfrak{J} P^0$, where $P^0$ is a graded projective cover of $M$. The proof of this fact is completely the same as the original proof.
\end{proof}

An immediate corollary of the above theorem is:

\begin{corollary}
The graded algebra $A$ is generalized Koszul if and only if $A$ is a projective $A_0$-module and $\Gamma = \Ext _A^{\ast} (A_0, A_0)$ is generated in degrees 0 and 1.
\end{corollary}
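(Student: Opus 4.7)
The plan is to obtain the corollary as a direct specialization of Theorem~1.1 (restated just above) to the module $M = A_0$, together with the already-noted fact that every generalized Koszul algebra is a projective $A_0$-module.

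First I would set up the two implications. For the ``only if'' direction, assume that $A$ is generalized Koszul, i.e.\ that the graded $A$-module $A_0$ is generalized Koszul. The paragraph immediately following Proposition~2.5 records that this forces $A$ to be a projective $A_0$-module (apply Proposition~2.5 to $M = A_0$ and note that $\mathfrak{J}^i A_0[-i] = 0$ while $A_i$ appears as a direct summand of the relevant term, or alternatively use Proposition~2.6 to see that each $A_i$ is projective over $A_0$). Having secured the projectivity hypothesis of Theorem~1.1, I would then apply that theorem to $M = A_0$ itself (which is trivially projective as an $A_0$-module) to conclude that $\Ext_A^*(A_0,A_0) = \Gamma$ is generated in degree $0$ as a graded $\Gamma$-module, i.e.\ $\Gamma^1 \cdot \Gamma^i = \Gamma^{i+1}$ for all $i \geqslant 0$.

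Next I would translate this degree-$0$ generation statement into the algebra-generation statement in the corollary. Since $\Gamma^1 \cdot \Gamma^0 = \Gamma^1$ is automatic and since $\Gamma^i = \Gamma^1 \cdot \Gamma^{i-1}$ inductively yields $\Gamma^i = (\Gamma^1)^i \cdot \Gamma^0$, the module-theoretic condition is equivalent to saying that $\Gamma$ is generated as a $k$-algebra by $\Gamma^0 \oplus \Gamma^1$, i.e.\ in degrees $0$ and $1$. This handles the ``only if'' part.

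For the ``if'' direction, suppose $A$ is a projective $A_0$-module and that $\Gamma$ is generated in degrees $0$ and $1$ as an algebra. Then the induction reversed in the previous paragraph shows $\Gamma^1 \cdot \Gamma^i = \Gamma^{i+1}$ for every $i \geqslant 0$, so $\Ext_A^*(A_0, A_0)$ is generated in degree $0$ as a graded $\Gamma$-module. Taking $M = A_0$ in Theorem~1.1 (with $A_0$ trivially projective over $A_0$), we conclude that $A_0$ is a generalized Koszul $A$-module, i.e.\ $A$ is a generalized Koszul algebra.

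There is no real obstacle here; the only thing to be careful about is the equivalence between ``generated in degree $0$ as a graded module over itself'' and ``generated in degrees $0$ and $1$ as a $k$-algebra'' for the graded algebra $\Gamma$, which is a small but essential unpacking of definitions.
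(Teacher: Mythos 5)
Your proposal is correct and is exactly the argument the paper intends: the corollary is stated as an immediate consequence of Theorem 2.8 applied to $M = A_0$ (whose generalized Koszulity is, by definition, the generalized Koszulity of $A$), combined with the fact, recorded in the paragraph after Proposition 2.5, that a generalized Koszul algebra is a projective $A_0$-module, and with the routine unpacking of ``generated in degree $0$ as a graded $\Gamma$-module'' into ``generated in degrees $0$ and $1$ as an algebra.'' One small correction to your parenthetical justifications of the projectivity claim: it is obtained by applying Proposition 2.5 to $M = {}_A A$, so that $A_i \cong (\mathfrak{J}^i A[-i])_0$ is a projective $A_0$-module, not to $M = A_0$ (for which $\mathfrak{J}^i A_0 = 0$ and nothing is learned), and Proposition 2.6 cannot be invoked here since it already assumes $A$ is projective over $A_0$, which would be circular.
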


Now we can prove a generalized Koszul duality.

\begin{theorem}
Let $A = \bigoplus _{i \geqslant 1} A_i$ be a locally finite graded algebra with $\fdim A_0 = \fdim A_0^{\op} = 0$. If $A$ is a generalized Koszul algebra, then $E = \Ext ^{\ast}_{A} (-, A_0)$ gives a duality between the category of generalized Koszul $A$-modules and the category of generalized Koszul $\Gamma = \Ext _A^{\ast} (A_0, A_0)$-modules. That is, if $M$ is a generalized Koszul $A$-module, then $E(M)$ is a generalized Koszul $\Gamma$-module, and $E_{\Gamma}EM = \Ext ^{\ast} _{\Gamma} (EM, \Gamma_0) \cong M$ as graded $A$-modules.
\end{theorem}

\begin{proof}
This is a varied version of Theorem 4.1 in \cite{Li1}. We give a detailed proof for the convenience of the reader. Since $A_0$ is a generalized Koszul module and $M$ is a projective $A_0$-module, $M_0$ is generalized Koszul as well. By Proposition 2.5, $\mathfrak{J} M[-1]$ is also generalized Koszul. Furthermore, we have the following short exact sequence of generalized Koszul modules:
\begin{align*}
\xymatrix{ 0 \ar[r] & \Omega M [-1] \ar[r] & \Omega (M_0)[-1] \ar[r] & \mathfrak{J} M[-1] \ar[r] & 0.}
\end{align*}
As in the proof of Proposition 2.4, this sequence induces exact sequences of generalized Koszul modules recursively:
\begin{align*}
\xymatrix{ 0 \ar[r] & \Omega^i(M)[-i] \ar[r] & \Omega^i(M_0)[-i] \ar[r] & \Omega^{i-1}(\mathfrak{J} M[-1]) [1-i] \ar[r] & 0.}
\end{align*}
Take a fixed sequence for a certain $i > 0$. It gives a splitting exact sequence of $A_0$-modules:
\begin{align*}
\xymatrix{ 0 \ar[r] & \Omega^i(M)_i \ar[r] & \Omega^i(M_0)_i \ar[r] & \Omega^{i-1}(\mathfrak{J}M [-1])_{i-1} \ar[r] & 0.}
\end{align*}
Applying $\Hom_{A_0} (-, A_0)$ to it and using the following isomorphism for a graded $A$-module $N$ generated in degree $i$
\begin{equation*}
\Hom _{A} (N, A_0) \cong \Hom_{A} (N_i, A_0) \cong \Hom _{A_0} (N_i, A_0,)
\end{equation*}
we get:
\begin{align*}
0 \rightarrow \Hom _{A} (\Omega^{i-1}(\mathfrak{J} M[-1]), A_0) \rightarrow \Hom _{A} (\Omega^i(M_0), A_0) \rightarrow \Hom _{A} (\Omega^iM, A_0) \rightarrow 0,
\end{align*}
which is isomorphic to
\begin{align*}
0 \rightarrow \Ext^{i-1} _{A} (\mathfrak{J} M[-1], A_0) \rightarrow \Ext^i _A (M_0, A_0) \rightarrow \Ext^i _A (M, A_0) \rightarrow 0.
\end{align*}
Now let the index $i$ vary and put these sequences together. We have:
\begin{align*}
\xymatrix{ 0 \ar[r] & E(\mathfrak{J} M[-1])[1] \ar[r] & E(M_0) \ar[r] & EM \ar[r] & 0.}
\end{align*}

Let us focus on this sequence. We claim $\Omega(EM) \cong E(\mathfrak{J} M[-1])[1]$. Indeed, since $M_0$ is a projective $A_0$-module, $E(M_0)$ is a projective $\Gamma$-module. But $\mathfrak{J} M[-1]$ is generalized Koszul, so $E(\mathfrak{J} M[-1])$ is generated in degree 0 by the previous theorem. Thus $E(\mathfrak{J} M[-1])[1]$ is generated in degree 1, and $E(M_0)$ is a graded projective cover of $EM$. This proves the claim. Consequently, $\Omega(EM)$ is generated in degree 1. Moreover, replacing $M$ by $\mathfrak{J} M[-1]$ (which is also generalized Koszul) and using the claimed identity, we have that
\begin{equation*}
\Omega^2(EM) = \Omega(E(\mathfrak{J} M[-1])[1]) = \Omega( E(\mathfrak{J} M[-1]) [1] = E( \mathfrak{J} ^2M[-2])[2],
\end{equation*}
is generated in degree 2. By recursion, $\Omega^i (EM) \cong E(\mathfrak{J} ^iM [-i])[i]$ is generated in degree $i$ for all $i \geqslant 0$. Thus $EM$ is a generalized Koszul $\Gamma$-module (note that $\Gamma_0 \cong A_0^{\op}$ and $\fdim \Gamma_0 = \fdim A_0^{\op} = 0$). In particular for $M = _AA$,
\begin{equation*}
EA = \Ext _{A} ^{\ast} (A, A_0) = \Hom _A (A, A_0) = \Gamma_0
\end{equation*}
is a generalized Koszul $\Gamma$-module.

Since $\Omega ^i (EM)$ is generated in degree $i$,
\begin{align*}
\Omega^i (EM)_i & \cong E(\mathfrak{J} ^iM [-i])[i]_i \cong E(\mathfrak{J} ^iM[-i])_0 \\
& = \Hom _{A} (\mathfrak{J} ^iM[-i], A_0) \cong \Hom _{A} (M_i, A_0).
\end{align*}
We also have
\begin{align*}
\Hom_{\Gamma} (\Omega^i(EM), \Gamma_0) & \cong \Hom _{\Gamma _0} (\Omega^i(EM)_i, \Gamma_0) \nonumber \\
& \cong \Hom _{\Gamma _0} (\Hom_{A} (M_i, A_0), \Gamma_0) \nonumber \\
& \cong \Hom _{\Gamma_0} (\Hom_{A_0} (M_i, A_0), \Gamma_0) \nonumber \\
& \cong M_i.
\end{align*}
The last isomorphism holds because $M_i$ is a projective $A_0$-module and $\Gamma_0 \cong A_0^{op}$. Therefore, we get
\begin{equation*}
\Ext _{\Gamma}^i (EM, \Gamma_0) \cong \Hom_{\Gamma} (\Omega^i(EM), \Gamma_0) \cong M_i
\end{equation*}
for every $i \geqslant 0$. Adding them together, $E_{\Gamma}E(M) \cong \bigoplus_{i=0}^{\infty} M_i \cong M$.\

Now we have $E_{\Gamma} ( E(A)) = E_{\Gamma} (\Gamma_0) \cong A$. Moreover, $\Gamma$ is a graded algebra such that $\Gamma_0 \cong A_0^{\textnormal{op}}$ is self-injective as an algebra and generalized Koszul as a $\Gamma$-module. Using this duality, we can exchange $A$ and $\Gamma$ in the above reasoning and get $EE_{\Gamma}(N) \cong N$ for an arbitrary Koszul $\Gamma$-module $N$. Thus $E$ is a dense functor.

Let $L$ be another generalized Koszul $A$-module. Since $L, M, EL, EM$ are all generated in degree 0, we have
\begin{align*}
\hom _{\Gamma} (EL, EM) & \cong \Hom_{\Gamma_0} ((EL)_0, (EM)_0)\\
& \cong \Hom_{\Gamma_0} (\Hom_A (L, A_0), \Hom_A (M, A_0))\\
& \cong \Hom_{A_0 ^{\textnormal{op}}} (\Hom_{A_0} (L_0, A_0), \Hom_{A_0} (M_0, A_0))\\
& \cong \Hom_{A_0} (L_0, M_0) \cong \hom_A (L, M).
\end{align*}
Consequently, $E$ is a duality between the category of generalized Koszul $A$-modules and the category of generalized Koszul $\Gamma$-modules.
\end{proof}

It is well known that a locally finite graded algebra $A$ is classical Koszul if and only if the opposite algebra $A^{\op}$ is also classical Koszul. Unfortunately, this result does not hold in the generalized theory. Here is an example.

\begin{example}
Let $A$ be the path algebra of the following quiver with relations $\delta^2 = \alpha \delta = 0$. Put $A_0 = \langle 1_x, 1_y, \delta \rangle$ and $A_1 = \langle \alpha \rangle$.
\begin{equation*}
\xymatrix{ x \ar@(ld,lu)[]|{\delta} \ar[r]^{\alpha} & y}
\end{equation*}
Then the (graded) left indecomposable projective modules and right indecomposable projective modules are described as follows, where the indices mean the degree.
\begin{equation*}
_LP_x = \begin{matrix} & x_0 & \\ x_0 & & y_1 \end{matrix}, \quad _LP_y = y_0; \quad _RP_x = \begin{matrix} x_0 \\ x_0 \end{matrix}, \quad _RP_y = \begin{matrix} y_0 \\ x_1 \end{matrix}.
\end{equation*}
It is not hard to check that $A$ is a generalized Koszul algebra, but $A^{\op}$ is not. Actually, it is because that $A$ is a projective as a left $A_0$-module, but $A^{\op}$ is not a left projective $A_0^{\op} \cong A_0$-module.
\end{example}

\section{A relation between the generalize theory and the classical theory}

In this section we describe a correspondence between the generalized Koszul theory we just developed and the classical theory. As before, let $A = \bigoplus _{i \geqslant 0} A_i$ be a locally finite graded algebra generated in degrees 0 and 1. At this moment we do \textbf{not} assume the condition that $\fdim A_0 = \fdim A_0^{\op} = 0$.

Let $\mathfrak{r}$ be the radical of $A_0$, and $\mathfrak{R} = A \mathfrak{r} A$ be the two-sided ideal generated by $\mathfrak{r}$. We then define the quotient graded algebra $\bar{A} = A / \mathfrak{R} = \bigoplus _{i \geqslant 0} A_i / \mathfrak{R}_i$. Clearly, $\bar{A}$ is a locally finite graded algebra for which the grading is induced from that of $A$, and $\mathfrak {R} _s = \sum _{i = 0}^s A_i \mathfrak{r} A_{s-i}$. Note that $\bar{A}_0 = A_0 / \mathfrak{r}$ is a semisimple algebra. Given an arbitrary graded $A$-module $M = \bigoplus _{i \geqslant 0} M_i$, define $\bar{M} =  M / \mathfrak{R} M = \bigoplus _{i \geqslant 0} M_i / (\mathfrak{R} M)_i$. Then $\bar{M}$ is a graded $\bar{A}$-module and $\bar{M} \cong \bar{A} \otimes_A M$.

We use an example to show our construction.

\begin{example}
Let $A$ be the path algebra of the following quiver with relations: $\delta^2 = \theta^2 = 0$, $\theta \alpha = \alpha \delta$. Put $A_0 = \langle 1_x, 1_y, \delta, \theta \rangle$ and $A_1 = \langle \alpha, \theta \alpha \rangle$.
\begin{equation*}
\xymatrix{ x \ar@(ld,lu)[]|{\delta} \ar[r]^{\alpha} & y \ar@(rd,ru)[]|{\theta}}
\end{equation*}
The structures of graded indecomposable projective $A$-modules are:
\begin{equation*}
P_x = \begin{matrix} & x_0 & \\ x_0 & & y_1 \\ & y_1 & \end{matrix} \qquad P_y = \begin{matrix} y_0 \\ y_0 \end{matrix}.
\end{equation*}
We find $\mathfrak{r} = \langle \delta, \theta \rangle$, $\mathfrak{R} = \langle \delta, \theta, \theta \alpha \rangle$. Then the quotient algebra $\bar{A}$ is the path algebra of the following quiver with a natural grading:
\begin{equation*}
\xymatrix{ x \ar[r] ^{\alpha} & y}.
\end{equation*}

Let $M = \rad P_x = \langle \delta, \alpha, \alpha \delta \rangle$ which is a graded $A$-module. It has the following structure and is not generated in degree 0:
\begin{equation*}
M = \begin{matrix} x_0 & & y_1 \\ & y_1 & \end{matrix}.
\end{equation*}
Then $\bar{M}_0 = M_0 / \mathfrak{r} M_0 = \langle \bar{\delta} \rangle \cong S_x$, the simple $\bar{A}$-module corresponding to $x$; $\bar{M}_1 = M_1 / (\mathfrak{r} M_1 + A_1 \mathfrak{r} M_0) = \langle \bar{\alpha} \rangle \cong S_y[1]$. Therefore, $\bar{M} \cong S_x \oplus S_y [1]$ is a direct sum of two simple $\bar{A}$-modules, and is not generated in degree 0 either.
\end{example}

The following proposition is crucial to prove the main result in this section.

\begin{proposition}
A graded $A$-module $M$ is generated in degree 0 if and only if the corresponding graded $\bar{A}$-module $\bar{M}$ is generated in degree 0.
\end{proposition}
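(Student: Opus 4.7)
The forward direction is essentially formal: the surjection $\pi\colon M \twoheadrightarrow \bar{M}$ is $A$-linear, restricts to a surjection $M_0 \twoheadrightarrow \bar{M}_0 = M_0/\mathfrak{r}M_0$, and therefore if $M = A\cdot M_0$ then $\bar{M} = \pi(A\cdot M_0) = \bar{A}\cdot\bar{M}_0$. The substance is the converse, which I would prove by induction on $s$, showing $M_s = A_1 M_{s-1}$ for every $s \geq 1$; combined with the fact that $A$ is generated in degrees $0$ and $1$, this yields $M = A\cdot M_0$.

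Unwinding the definition of $\bar{M}$ being generated in degree $0$ gives exactly
\[
M_s \;=\; A_1 M_{s-1} + (\mathfrak{R}M)_s \qquad \text{for every } s \geq 1.
\]
Using $\mathfrak{R}M = A\mathfrak{r}AM = A\mathfrak{r}M$ (since $AM = M$) together with $\mathfrak{r}\subseteq A_0$, I would decompose
\[
(\mathfrak{R}M)_s \;=\; \mathfrak{r}M_s \;+\; \sum_{i=1}^{s} A_i\,\mathfrak{r}\,M_{s-i}.
\]
For each $i\geq 1$ we have $s-i<s$, so the inductive hypothesis gives $M_{s-i} = A_1^{s-i} M_0$; then, using $\mathfrak{r}\subseteq A_0$ and the identity $A_s = A_1\cdot A_{s-1}$ (which holds because $A$ is generated in degrees $0$ and $1$),
\[
A_i\,\mathfrak{r}\,M_{s-i} \;\subseteq\; A_i A_{s-i} M_0 \;\subseteq\; A_s M_0 \;=\; A_1\cdot A_{s-1} M_0 \;=\; A_1 M_{s-1}.
\]
Substituting back, the hypothesis collapses to $M_s = A_1 M_{s-1} + \mathfrak{r}M_s$.

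To finish, observe that $A_1 M_{s-1}$ is an $A_0$-submodule of $M_s$ (since $A_0 A_1 \subseteq A_1$), so $M_s/A_1 M_{s-1}$ is a finite-dimensional $A_0$-module by local finiteness of $M$. The previous identity rewrites as $\mathfrak{r}\cdot(M_s/A_1 M_{s-1}) = M_s/A_1 M_{s-1}$, and since $\mathfrak{r} = \rad A_0$, the ordinary Nakayama lemma over the finite-dimensional algebra $A_0$ forces this quotient to vanish, completing the induction. The only delicate point is the appeal to Nakayama, which is available precisely because all modules are locally finite; notably, no assumption on $\fdim A_0$ or any compatibility between $\mathfrak{r}$ and $A_1$ is needed, which is why the proposition can be stated in this section before those standing hypotheses are reimposed.
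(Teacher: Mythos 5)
Your argument is correct and follows essentially the same route as the paper: induction on the degree, decomposing $(\mathfrak{R}M)_s = \mathfrak{r}M_s + \sum_{i\geq 1} A_i\mathfrak{r}M_{s-i}$, absorbing the terms with $i\geq 1$ into $A_sM_0$ via the inductive hypothesis, and finishing with Nakayama's lemma applied to the finite-dimensional $A_0$-module $M_s$ modulo the part generated from below. The only differences are cosmetic (submodules versus elements, and phrasing the induction as $M_s = A_1M_{s-1}$ rather than $M_s = A_sM_0$), and your closing remark that no hypothesis on $\fdim A_0$ or on $\mathfrak{r}A_1$ versus $A_1\mathfrak{r}$ is used matches the paper's explicit disclaimer at the start of the section.
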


\begin{proof}
If $M$ is generated in degree 0, then $A_i M_0 = M_i$ for all $i \geqslant 0$. By our construction, it is clear that $\bar{A}_i \bar{M}_0 = \bar{M}_i$. That is, $\bar{M}$ is generated in degree 0.

Conversely, suppose that $\bar{M}$ is generated in degree 0. We want to show $A_i M_0 = M_i$ for $i \geqslant 0$. We use induction to prove this identity. Clearly, it holds for $i = 0$. So we suppose that it is true for all $0 \leqslant i < n$ and consider $M_n$.

Take $v_n \in M_n$ and consider its image $\bar{v}_n$ in $\bar{M}_n = M_n / \sum _{i = 0}^n A_i \mathfrak{r} M_{n-i}$. Since $\bar{M}$ is generated in degree 0, we can find some $a_n \in A_n$ and $v_0 \in M_0$ such that $\bar{v}_n = \bar{a}_n \bar{v}_0$. Thus $\overline {v_n - a_n v_0} = \bar{v}_n - \bar{a}_n \bar{v}_0 = 0$. This means
\begin{equation*}
v_n - a_n v_0 \in \sum_{i = 0}^n A_i \mathfrak{r} M_{n-i} = \mathfrak{r} M_n + \sum _{i=1}^n A_i \mathfrak{r} M_{n-i} = \mathfrak{r} M_n + \sum _{i=1}^n A_i \mathfrak{r} A_{n-i} M_0,
\end{equation*}
where the last identity follows from the induction hypothesis. But it is clear $A_n M_0 \supseteq \sum _{i=1}^n A_i \mathfrak{r} A_{n-i} M_0$, so $v_n - a_n v_0 \in \mathfrak{r} M_n + A_n M_0$. Consequently, $v_n \in \mathfrak{r} M_n + A_n M_0$. Since $v_n \in M_n$ is arbitrary, we have $M_n \subseteq \mathfrak{r} M_n + A_n M_0$. Applying Nakayama's lemma to these $A_0$-modules, we conclude that $M_n = A_n M_0$ as well. The conclusion then follows from induction.
\end{proof}

\begin{lemma}
Let $M$ be a graded $A$-module generated in degree 0. If $P$ is a grade projective cover of $M$, then $\bar{P}$ is a graded projective cover of $\bar{M}$.
\end{lemma}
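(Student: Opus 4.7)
The plan is to combine three observations: (i) that $\bar{A} \otimes_A -$ preserves projectives and surjections; (ii) that Proposition 3.3 (the preceding proposition) shows $\bar{P}$ is generated in degree $0$; and (iii) that since $\bar{A}_0 = A_0/\mathfrak{r}$ is semisimple, the graded projective cover property collapses to an isomorphism of degree-$0$ parts, which we can read off from the original projective cover $P \to M$.

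First I would observe that since $M$ is generated in degree $0$, its graded projective cover $P$ is also generated in degree $0$; by the preceding proposition, $\bar{P}$ is then a graded $\bar{A}$-module generated in degree $0$. Writing $P$ as a direct summand of some graded free $A$-module $F = \bigoplus A[j_\alpha]$, the tensor $\bar{P} = \bar{A} \otimes_A P$ is a direct summand of $\bar{F} = \bigoplus \bar{A}[j_\alpha]$, hence is graded projective over $\bar{A}$. Right exactness of $\bar{A} \otimes_A -$ gives a graded surjection $\bar{P} \twoheadrightarrow \bar{M}$.

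Next I would compute the degree-$0$ part of both modules. Because $\mathfrak{R}_s = \sum_{i=0}^{s} A_i \mathfrak{r} A_{s-i}$ and $M$ is non-negatively graded, only the summand $\mathfrak{R}_0 M_0 = \mathfrak{r} M_0$ contributes in degree $0$, giving $\bar{M}_0 = M_0/\mathfrak{r} M_0$ and similarly $\bar{P}_0 = P_0/\mathfrak{r} P_0$. Since $P \to M$ is a graded projective cover and $\mathfrak{J}$ lies in the graded radical by Lemma 2.1, the kernel of $P \to M$ lies in the graded radical $\operatorname{rad}_{gr}(P)$, whose degree-$0$ component is $\mathfrak{r} P_0$; consequently the induced map $P_0 \to M_0$ is a projective cover of $A_0$-modules, and the quotient map $\bar{P}_0 \to \bar{M}_0$ is an isomorphism.

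Finally I would package this as a graded projective cover over $\bar{A}$. Since $\bar{A}_0$ is semisimple, the graded Jacobson radical of $\bar{A}$ equals $\bar{\mathfrak{J}} = \bigoplus_{i \geqslant 1} \bar{A}_i$, and since $\bar{P}$ and $\bar{M}$ are generated in degree $0$ we have $(\bar{P}/\bar{\mathfrak{J}}\bar{P})_i = 0$ for $i \geqslant 1$ and $(\bar{P}/\bar{\mathfrak{J}}\bar{P})_0 = \bar{P}_0$, and likewise for $\bar{M}$. Thus the induced map on graded tops is precisely the isomorphism $\bar{P}_0 \to \bar{M}_0$ from the previous step, so the kernel of $\bar{P} \to \bar{M}$ is superfluous and $\bar{P}$ is a graded projective cover of $\bar{M}$. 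The only step that requires care is the identification of $\operatorname{rad}_{gr}(P)_0$ with $\mathfrak{r} P_0$, i.e.\ the assertion that passing to degree $0$ in the graded projective cover statement really yields a projective cover over $A_0$; everything else is formal from Proposition 3.3 and the right exactness of $\bar{A} \otimes_A -$.
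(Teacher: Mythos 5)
Your proposal is correct and follows essentially the same route as the paper's proof: show $\bar{P}$ is graded projective with a surjection onto $\bar{M}$, invoke the preceding proposition for generation in degree $0$, and reduce the projective cover claim to the degree-$0$ isomorphism $\bar{P}_0 = P_0/\mathfrak{r}P_0 \cong M_0/\mathfrak{r}M_0 = \bar{M}_0$ coming from the fact that $P_0 \to M_0$ is a projective cover of $A_0$-modules. You simply spell out the steps (right exactness of $\bar{A}\otimes_A -$, the identification of the graded radical in degree $0$, and the semisimplicity of $\bar{A}_0$) that the paper leaves implicit.
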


\begin{proof}
Clearly, $\bar{P}$ is a graded projective module. Both $\bar{P}$ and $\bar{M}$ are generated in degree 0 by the previous proposition. To show that $\bar{P}_0$ is a graded projective cover of $\bar{M}_0$, it suffices to show that $\bar{P}_0$ is a projective cover of $\bar{M}_0$ as $\bar{A}_0$-modules. But this is clearly true since $\bar{P}_0 = P_0 / \mathfrak{r} P_0 \cong M_0 / \mathfrak{r} M_0 \cong \bar{M}_0$.
\end{proof}

In general, $A_1 \mathfrak{r} \neq \mathfrak{r} A_1$. However, if this is true, then $\mathfrak{R}_s = \mathfrak{r} A_s$. Indeed, $\mathfrak {R} _s = \sum _{i = 0}^s A_i \mathfrak{r} A_{s-i}$. Using the fact that $A$ is generated by $A_0$ and $A_1$, and the above commutative relation, we can show $\mathfrak{R}_s = \mathfrak{r} A_i A_{s-i} = \mathfrak{r} A_s$. Therefore, $\bar{A} = \bigoplus _{i \geqslant 0} A_i / \mathfrak{r} A_i$, and for every locally finite graded $A$-module $M = \bigoplus _{i \geqslant 0} M_i$, we have
\begin{equation*}
M_s = M_s / (\mathfrak{R}M)_s = M_s / \sum _{i=0}^s \mathfrak{R}_i M_{s-i} = M_s / \sum_{i=0}^s \mathfrak{r} A_i M_{s-i} = M_s / \mathfrak{r} M_s.
\end{equation*}
Moreover, the procedure of sending $M$ to $\bar{M}$ preserves exact sequences of graded $A$-modules which are projective regarded as $A_0$-modules.

\begin{lemma}
Let $0 \rightarrow L \rightarrow M \rightarrow N \rightarrow 0$ be a short exact sequence of graded $A$-modules such that all terms are projective $A_0$-modules. If $\mathfrak{r}A_1 = A_1 \mathfrak{r}$, then the corresponding sequence $0 \rightarrow \bar{L} \rightarrow \bar{M} \rightarrow \bar{N} \rightarrow 0$ is also exact.
\end{lemma}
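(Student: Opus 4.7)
The plan is to reduce the exactness question to a degreewise statement at the level of $A_0$-modules, and then use the $A_0$-projectivity hypothesis to conclude.

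The functor $\overline{(-)} = \bar{A} \otimes_A -$ is right exact, so without any further input the sequence $\bar{L} \to \bar{M} \to \bar{N} \to 0$ is exact. Thus the entire content of the lemma is the injectivity of $\bar{L} \to \bar{M}$, i.e., the identity $\mathfrak{R}M \cap L = \mathfrak{R}L$ inside $M$. Since everything is graded, I may verify this in each degree $s$ separately.

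Next I would invoke the hypothesis $\mathfrak{r}A_1 = A_1\mathfrak{r}$. As observed in the paragraph preceding the lemma, this commutation collapses the formula $\mathfrak{R}_s = \sum_{i=0}^{s} A_i \mathfrak{r} A_{s-i}$ to $\mathfrak{R}_s = \mathfrak{r}A_s$, and consequently $(\mathfrak{R}M)_s = \mathfrak{r}M_s$ and $(\mathfrak{R}L)_s = \mathfrak{r}L_s$. So what I must check is
\begin{equation*}
\mathfrak{r}M_s \cap L_s = \mathfrak{r}L_s \qquad \text{for every } s \geqslant 0.
\end{equation*}

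Now I use the $A_0$-projectivity hypothesis. Taking the degree-$s$ component of the given short exact sequence yields a short exact sequence
\begin{equation*}
0 \longrightarrow L_s \longrightarrow M_s \longrightarrow N_s \longrightarrow 0
\end{equation*}
of $A_0$-modules, and each of $L_s, M_s, N_s$ is a projective $A_0$-module (since $L$, $M$, $N$ are projective as $A_0$-modules and direct summands in the grading are $A_0$-submodules). Because $N_s$ is projective, the sequence splits as $A_0$-modules, so $M_s = L_s \oplus N_s'$ with $N_s' \cong N_s$. Applying $\mathfrak{r} \cdot$ gives $\mathfrak{r}M_s = \mathfrak{r}L_s \oplus \mathfrak{r}N_s'$, whence $\mathfrak{r}M_s \cap L_s = \mathfrak{r}L_s$, as required.

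The only real subtlety is remembering that without the commutation hypothesis $\mathfrak{r}A_1 = A_1\mathfrak{r}$ one cannot reduce $(\mathfrak{R}M)_s$ to $\mathfrak{r}M_s$, so the degreewise splitting of $A_0$-modules would not directly suffice; but once that reduction is in hand, the argument is a two-line application of the splitting property of projective $A_0$-modules, so there is no serious obstacle.
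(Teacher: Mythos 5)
Your proof is correct and follows essentially the same route as the paper's: both reduce, via the commutation hypothesis, to the degreewise identification $(\mathfrak{R}M)_s = \mathfrak{r}M_s$, and both then exploit the splitting of the degree-$s$ sequence of projective $A_0$-modules (you phrase the conclusion as the intersection condition $\mathfrak{r}M_s \cap L_s = \mathfrak{r}L_s$, while the paper passes to quotients of the split subsequences, but these are the same argument). No gaps.
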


\begin{proof}
By the above observation, for $s \geqslant 0$, we have $\bar{L}_s = L_s / \mathfrak{r} L_s$, and similar identities hold for $M$ and $N$. The given exact sequence induces a short exact sequence of $A_0$-modules $0 \rightarrow L_i \rightarrow M_i \rightarrow N_i \rightarrow 0$. Since all terms are projective $A_0$-modules, this sequence splits, and gives a split short exact sequence $0 \rightarrow \mathfrak{r} L_i \rightarrow \mathfrak{r} M_i \rightarrow \mathfrak{r} N_i \rightarrow 0$. Taking quotients, we get an exact sequence of $\bar{A}_0$-modules $0 \rightarrow \bar{L}_i \rightarrow \bar{M}_i \rightarrow \bar{N}_i \rightarrow 0$. Let the index $i$ vary and take direct sum. Then we get an exact sequence of graded $\bar{A}$-modules $0 \rightarrow \bar{L} \rightarrow \bar{M} \rightarrow \bar{N} \rightarrow 0$ as claimed.
\end{proof}

The condition that all terms are projective $A_0$-modules cannot be dropped, as shown by the following example.

\begin{example}
Let $A = A_0 = k[t]/(t^2)$ and $S$ be the simple module and consider a short exact sequence of graded $A$-modules $0 \rightarrow S \rightarrow A \rightarrow S \rightarrow 0$. We have $\bar{A} \cong k$. But the corresponding sequence $0 \rightarrow \bar{S} \rightarrow \bar{A} \rightarrow \bar{S} \rightarrow 0$ is not exact. Actually, the first map $\bar{S} \rightarrow \bar{A}$ is 0 since the image of $S$ is contained in $\mathfrak{r} A_0$.
\end{example}

Now we can prove the main result of this section.

\begin{theorem}
Let $A = \bigoplus _{i \geqslant 1} A_i$ be a locally finite graded algebra and $M$ be a graded $A$-module. Suppose that both $A$ and $M$ are projective $A_0$-modules, and $\mathfrak{r} A_1 = A_1 \mathfrak{r}$. Then $M$ is generalized Koszul if and only if  the corresponding grade $\bar{A}$-module $\bar{M}$ is classical Koszul. In particular, $A$ is a generalized Koszul algebra if and only if $\bar{A}$ is a classical Koszul algebra.
\end{theorem}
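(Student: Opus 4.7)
The plan is to compare the minimal graded projective resolution of $M$ over $A$ with that of $\bar{M}$ over $\bar{A}$ by passing through the bar functor, using Lemma 3.3 (bar of a projective cover is a projective cover) and Lemma 3.4 (bar preserves short exact sequences whose three terms are projective $A_0$-modules). The essential bridge is to check that all syzygies remain projective as $A_0$-modules so that Lemma 3.4 keeps applying. This is done as follows: if $N$ is generated in some degree and is projective as an $A_0$-module, and $P \twoheadrightarrow N$ is its graded projective cover, then $P$ is projective over $A_0$ (since $A$ is), and in each internal degree $j$ the exact sequence $0 \to (\Omega N)_j \to P_j \to N_j \to 0$ splits because $N_j$ is a summand of the $A_0$-projective $N$. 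Hence $(\Omega N)_j$ is a summand of the $A_0$-projective $P_j$, and so $\Omega N$ is a projective $A_0$-module. Starting from $M$ and iterating, every syzygy $\Omega^i M$ is projective over $A_0$.

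For the only-if direction, assume $M$ is generalized Koszul with minimal linear resolution $\cdots \to P^1 \to P^0 \to M \to 0$. Each short exact sequence $0 \to \Omega^{i+1} M \to P^i \to \Omega^i M \to 0$ has all three terms projective over $A_0$ by the observation above, so by Lemma 3.4 its bar is exact, and by Lemma 3.3, $\bar{P}^i$ is a graded projective cover of $\overline{\Omega^i M}$. Therefore $\overline{\Omega^{i+1} M} \cong \Omega(\overline{\Omega^i M})$, and by recursion $\Omega^i \bar{M} \cong \overline{\Omega^i M}$. The identity $P^i = A \cdot P^i_i$ gives $\bar{P}^i = \bar{A} \cdot \bar{P}^i_i$, so each $\bar{P}^i$ is generated in degree $i$; this exhibits a linear $\bar{A}$-projective resolution of $\bar{M}$.

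For the if direction, take a graded projective cover $P^0 \twoheadrightarrow M$. Since $\bar{M}$ is generated in degree 0, so is $M$ by Proposition 3.2. By Lemmas 3.3 and 3.4 (whose hypothesis holds because $M$, $P^0$ and $\Omega M$ are all projective $A_0$-modules by the bridge observation), we get $\overline{\Omega M} \cong \Omega \bar{M}$, which is generated in degree 1 since $\bar{M}$ is classical Koszul. The graded analogue of Proposition 3.2 (shifted to degree 1) then shows $\Omega M$ is generated in degree 1. Now $\Omega M[-1]$ is projective over $A_0$, and its bar is $\Omega \bar{M}[-1]$, which is classical Koszul because classical Koszul modules are closed under shifted syzygies. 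Recursion therefore gives that $\Omega^i M$ is generated in degree $i$ for every $i \geqslant 0$, so $M$ is generalized Koszul. The final algebra statement is the special case $M = A_0$, which is itself a projective $A_0$-module and whose bar is $\bar{A}_0$.

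The main obstacle is purely bookkeeping: one must be careful that the bar functor commutes with the syzygy operation in each step, which requires re-verifying the projectivity hypothesis of Lemma 3.4 for the new short exact sequence appearing at every recursion level. Once the bridge observation above is in hand, the rest of the argument is a clean induction; there is no obstruction beyond making sure Propositions 3.2, 3.3 and 3.4 can be invoked in a compatible shifted form.
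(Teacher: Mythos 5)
Your proof follows essentially the same route as the paper's: bar the minimal graded projective resolution of $M$ using Lemmas 3.3 and 3.4, identify $\Omega^i\bar{M}$ with $\overline{\Omega^i M}$, and transfer the ``generated in degree $i$'' condition back and forth via Proposition 3.2. Your bridge observation that every syzygy remains projective over $A_0$ (via the degreewise splitting of $0 \to (\Omega N)_j \to P_j \to N_j \to 0$ against the $A_0$-projective quotient, with no appeal to the splitting condition (S)) is precisely the justification for the paper's unexplained assertion that ``all terms in this resolution and all syzygies are projective $A_0$-modules,'' and it is correct.
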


\begin{proof}
Let
\begin{equation}
\xymatrix{ \ldots \ar[r] & P^2 \ar[r] & P^1 \ar[r] & P^0 \ar[r] & M \ar[r] & 0}
\end{equation}
be a minimal projective resolution of $M$. Note that all terms in this resolution and all syzygies are projective $A_0$-modules. By Lemmas 3.3 and 3.4, $\bar{M}$ has the following minimal projective resolution
\begin{equation}
\xymatrix{ \ldots \ar[r] & \overline {P^2} \ar[r] & \overline {P^1} \ar[r] & \overline {P^0} \ar[r] & \overline {M} \ar[r] & 0}.
\end{equation}
Moreover, this resolution is linear if and only if the resolution (3.1) is linear by Proposition 3.2. That is, $M$ is generalized Koszul if and only if $\bar{M}$ is classical Koszul. This proves the first statement. Applying it to the graded $A$-module $A_0$ we deduce the second statement immediately.
\end{proof}

If $\fdim A_0 = \fdim A_0^{\op} = 0$, i.e., $A_0$ has the splitting property (S), we have the following corollary:

\begin{corollary}
Let $A = \bigoplus _{i \geqslant 1} A_i$ be a locally finite graded algebra with $\fdim A_0 = \fdim A_0^{\op} = 0$, and suppose $\mathfrak{r} A_1 = A_1 \mathfrak{r}$. Then:
\begin{enumerate}
\item $A$ is a generalized Koszul algebra if and only it is a projective $A_0$-module and $\bar{A}$ is a classical Koszul algebra.
\item Suppose that $A$ is a projective $A_0$-module. A graded $A$-module $M$ is generalized Koszul if and only if it is a projective $A_0$-module and the corresponding grade $\bar{A}$-module $\bar{M}$ is classical Koszul.
\end{enumerate}
\end{corollary}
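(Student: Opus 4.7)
Proof proposal for Corollary 3.6.

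The corollary is essentially Theorem 3.5 repackaged: it replaces the ad hoc $A_0$-projectivity hypothesis on $M$ in Theorem 3.5 by the structural condition $\fdim A_0 = \fdim A_0^{\op} = 0$, which, via the splitting property (S), is exactly what forces generalized Koszul objects to be $A_0$-projective. The plan is to combine Theorem 3.5 with two earlier consequences of (S), namely the remark following Proposition 2.5 (every generalized Koszul algebra is $A_0$-projective) and Proposition 2.6 (over an $A_0$-projective $A$, every generalized Koszul module is $A_0$-projective).

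For part (1), I would argue both implications separately. Assume first that $A$ is a generalized Koszul algebra. The consequence of Proposition 2.5 cited in the paragraph after its proof gives that $A$ is a projective $A_0$-module, so the hypotheses of Theorem 3.5 are met. Applying that theorem to the graded $A$-module $M = A_0$, I obtain that $\overline{A_0}$ is classical Koszul over $\bar A$. Since $A_0$ is concentrated in degree $0$ and $\mathfrak{R}_0 = \mathfrak r$, we have $\overline{A_0} = A_0/\mathfrak r A_0 = A_0/\mathfrak r \cong \bar A_0$, so this says precisely that $\bar A$ is a classical Koszul algebra. Conversely, suppose $A$ is a projective $A_0$-module and $\bar A$ is classical Koszul. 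Applying Theorem 3.5 to $M = A_0$ (which is trivially a projective $A_0$-module) and using the identification $\overline{A_0} \cong \bar A_0$ again, I conclude that $A_0$ is a generalized Koszul $A$-module, i.e.\ $A$ is a generalized Koszul algebra.

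For part (2), the argument is symmetric and shorter. Assume $M$ is generalized Koszul. Since $A$ is a projective $A_0$-module by hypothesis, Proposition 2.6 implies that $M$ is also a projective $A_0$-module; then Theorem 3.5 gives that $\bar M$ is classical Koszul. Conversely, if $M$ is a projective $A_0$-module and $\bar M$ is classical Koszul, Theorem 3.5 directly yields that $M$ is generalized Koszul.

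There is no real obstacle here: the corollary is a bookkeeping statement, and every ingredient has been established. The only point requiring a brief verification is the identification $\overline{A_0} \cong \bar A_0$ used in part (1) to match the module-theoretic and algebra-theoretic definitions of ``(classical/generalized) Koszul'', but this is immediate from the definitions $\mathfrak{R} = A\mathfrak r A$ and $\bar A = A/\mathfrak R$ combined with $A_0$ living in degree $0$.
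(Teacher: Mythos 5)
Your proposal is correct and follows essentially the same route as the paper: both directions of each part are reduced to Theorem 3.5, with the remark after Proposition 2.5 supplying $A_0$-projectivity of a generalized Koszul algebra and Proposition 2.6 supplying $A_0$-projectivity of a generalized Koszul module. The identification $\overline{A_0}\cong\bar A_0$ that you verify is already built into the ``in particular'' clause of Theorem 3.5, so nothing further is needed.
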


\begin{proof}
If $A$ is a generalized Koszul algebra, then applying Proposition 2.5 to $_A A$ we conclude that it is a projective $A_0$-module. Moreover, $\bar{A}$ is a classical Koszul algebra by the previous theorem. The converse statement also follows from the previous theorem. This proves the first statement.

If $A$ is a projective $A_0$-module and $M$ is generalized Koszul, by Proposition 2.6 $M$ is a projective $A_0$-module. Moreover, $\bar{M}$ is a classical Koszul module by the previous theorem. The converse statement also follows from the previous theorem.
\end{proof}

We cannot drop the condition that $A$ is a projective $A_0$-module in the above theorem, as shown by the following example.

\begin{example}
Let $A$ be the path algebra of the following quiver with relations: $\delta^2 = \theta^2 = 0$, $\theta \alpha = \alpha \delta = 0$. Put $A_0 = \langle 1_x, 1_y, \delta, \theta \rangle$ and $A_1 = \langle \alpha \rangle$.
\begin{equation*}
\xymatrix{ x \ar@(ld,lu)[]|{\delta} \ar[r] ^{\alpha} & y \ar@(rd,ru)[]|{\theta}}
\end{equation*}
The structures of graded indecomposable projective $A$-modules are:
\begin{equation*}
P_x = \begin{matrix} & x_0 & \\ x_0 & & y_1 \end{matrix} \qquad P_y = \begin{matrix} y_0 \\ y_0 \end{matrix}.
\end{equation*}
We find $\mathfrak{r} = \langle \delta, \theta \rangle$. Then the quotient algebra $\bar{A}$ is the path algebra of the following quiver:
\begin{equation*}
\xymatrix{ x \ar[r] ^{\alpha} & y}.
\end{equation*}

Let $\Delta_x = P_x / S_y = \langle \delta, 1_x \rangle$ which is a graded $A$-module concentrated in degree 0. The first syzygy $\Omega (\Delta_x) \cong S_y[1]$ is generated in degree 1, but the second syzygy $\Omega^2 (\Delta_x) \cong S_y[1]$ is not generated in degree 2. Therefore, $\Delta_x$ is not generalized Koszul. However, $\bar{\Delta}_x \cong \bar{S}_x$ is obviously a classical Koszul $\bar{A}$-module. Moreover, we can check that $A$ is not a generalized Koszul algebra, but $\bar{A}$ is a classical Koszul algebra.
\end{example}

\section{Applications to directed categories}

In this section we describe some applications of the generalized Koszul theory to certain categories. For the convenience of the reader, let us give some background knowledge.

By the definition in \cite{Li1, Li3}, a \textit{directed category} $\mathcal{A}$ is a $k$-linear category such that there is a partial order $\leqslant$ on $\Ob \mathcal{A}$ satisfying the condition that $x \leqslant y$ whenever $\mathcal{A} (x, y) \neq 0$. In this section all directed categories $\mathcal{A}$ are supposed to have the following conditions: $\mathcal{A}$ is skeletal and has only finitely many objects; $\mathcal{A}$ is \textit{locally finite}, i.e., $\dim _k \mathcal{A}(x,y) < \infty$ for all $x, y \in \Ob \mathcal{A}$; the endomorphism algebra of every object is a local algebra. We also suppose that $\mathcal{A}$ is graded and $\mathcal{A}_0 = \bigoplus _{x \in \Ob \mathcal{A}} \mathcal{A} (x, x)$. Note that the space of all morphisms in $\mathcal{A}$ forms a graded algebra $A$ whose multiplication is determined by composition of morphisms. We call it the \textit{associated algebra} of $\mathcal{A}$.

In \cite{Li1} we described another close relation between the generalized theory and the classical theory for directed categories. The explicit correspondence is described as follows. Let $\mathcal{A}$ be a graded directed category with respect to a partial order $\leqslant$. We define $\mathcal{B}$ to be the graded subcategory of $\mathcal{A}$ formed by replacing the endomorphism algebra of every object by $k \cdot 1$, the span of the identity endomorphism. That is, $\Ob \mathcal{B} = \Ob \mathcal{A}$; $\mathcal{B} (x,y) = \mathcal{A} (x,y)$ if $x \neq y$ and $\mathcal{B} (x, x) = k \cdot 1_x$. Let $A$ and $B$ be the associated graded algebras of $\mathcal{A}$ and $\mathcal{B}$ respectively. Then we have $\bigoplus _{i \geqslant 1} A_i = \bigoplus _{i \geqslant 1} B_i$. Note that $B_0$ is a semisimple algebra, so the classical theory can be applied. On the other hand, $A_0$ as a direct sum of several finite-dimensional local algebras has finitistic dimension 0, so we can use the generalized theory.

\begin{theorem}
Let $A$ and $B$ be defined as above.
\begin{enumerate}
\item Suppose that $A$ is a generalized Koszul algebra. If $M$ is a generalized Koszul $A$-module, then the restricted module $M \downarrow _B^A$ is classical Koszul. In particular, $B$ is a classical Koszul algebra.
\item Suppose that $B$ is a classical Koszul algebra. If $M$ is a graded $A$-module satisfying that $\Omega^i (M)_i$ is a projective $A_0$-module for each $i \geqslant 0$ and $M \downarrow _B^A$ is classical Koszul, then $M$ is generalized Koszul.
\end{enumerate}
\end{theorem}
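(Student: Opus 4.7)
My plan rests on a key structural lemma about restricted induced modules and then deduces both parts from it. The lemma asserts that if $V$ is a projective $A_0$-module placed in degree $0$, then the induced projective $A$-module $P = A \otimes_{A_0} V$ restricted to $B$ has projective $B$-cover $Q = B \otimes_{B_0} V$ (viewing $V$ as a $B_0$-module through $B_0 \hookrightarrow A_0$), and the kernel of the surjection $Q \twoheadrightarrow P\downarrow_B^A$ is isomorphic as a graded $B$-module to $(A \otimes_{A_0} V'[1])\downarrow_B^A$ with $V' = A_1 \otimes_{B_0} \mathfrak{r} \otimes_{B_0} (V / \mathfrak{r}V)$ again projective over $A_0$. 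I would prove this degreewise, using the $(B_0, B_0)$-bimodule splitting $A_0 = B_0 \oplus \mathfrak{r}$ and the flatness of $B_0$-modules (since $B_0$ is semisimple) to identify $\ker(A_i \otimes_{B_0} V \to A_i \otimes_{A_0} V)$ with $A_i \otimes_{B_0} \mathfrak{r} \otimes_{B_0} (V/\mathfrak{r}V)$; the isomorphism $A_i \otimes_{A_0} A_1 \cong A_{i+1}$, valid because $A$ is generated in degrees $0$ and $1$, then repackages the entire graded kernel as a restriction of an induced module shifted by one. Iterating produces a linear projective $B$-resolution of $P\downarrow_B^A$ whose successive syzygies remain of the same form.

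For part (1), Propositions 2.5 and 2.6 ensure $A$ and $M$ are projective $A_0$-modules, so the minimal linear projective $A$-resolution $\cdots \to P^1 \to P^0 \to M \to 0$ has $P^i \cong A \otimes_{A_0} V^i$ with $V^i$ projective over $A_0$ placed in degree $i$. Restricting to $B$ gives an exact sequence whose terms each admit a linear projective $B$-resolution by the lemma with the $j$-th term generated in degree $i+j$. A Cartan--Eilenberg construction then assembles these into a double complex whose total complex is a projective $B$-resolution of $M\downarrow_B^A$ with degree-$n$ term $\bigoplus_{i+j=n} Q^{i,j}$ generated in degree $n$; hence $M\downarrow_B^A$ is classical Koszul. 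Specializing to $M = A_0$ yields the ``in particular'' statement.

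For part (2), I would induct on $i$ to show $\Omega^i_A(M)$ is generated in degree $i$. The base case is immediate since $M\downarrow_B^A$ is generated in degree $0$ over $B$ (being classical Koszul) and $B \subseteq A$ forces the same over $A$. For the inductive step, the projective $A$-cover $P^0 = A \otimes_{A_0} M_0$ is well-defined (as $M_0$ is projective over $A_0$ by hypothesis); comparing the restricted $A$-sequence $0 \to \Omega_A M \to P^0 \to M \to 0$ with the $B$-sequence $0 \to \Omega_B(M\downarrow_B^A) \to Q \to M\downarrow_B^A \to 0$ using the lemma's surjection $Q \twoheadrightarrow P^0\downarrow_B^A$ and a snake-lemma diagram chase produces the short exact sequence $0 \to K \to \Omega_B(M\downarrow_B^A) \to (\Omega_A M)\downarrow_B^A \to 0$. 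Here $K$ is classical Koszul over $B$ (by the lemma) and $\Omega_B(M\downarrow_B^A)$ is classical Koszul up to a degree-$1$ shift (being the first syzygy of the classical Koszul module $M\downarrow_B^A$), so Proposition 2.4 in its classical specialization forces $(\Omega_A M)\downarrow_B^A$ to be classical Koszul, hence generated in degree $1$ over $B$, which lifts to generation of $\Omega_A M$ in degree $1$ over $A$. Replacing $M$ by $\Omega_A M[-1]$ and iterating completes the induction.

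The main obstacle is the key lemma's kernel identification, which requires careful tracking of three different tensor products together with the $(B_0, B_0)$-bimodule structures of $\mathfrak{r}$, $A_i$, and $V$; in addition, one must verify that the natural $A$-module structure on the kernel coming from the identification with $A \otimes_{A_0} V'[1]$ agrees with the intrinsic $B$-action it inherits from $Q$, so that iterating the lemma genuinely yields a linear projective $B$-resolution. A secondary technical point in part (1) is verifying that the Cartan--Eilenberg total-complex construction preserves the linear grading, which relies on the horizontal lifts in the double complex respecting the graded structure throughout.
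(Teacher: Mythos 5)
Your argument hinges entirely on the ``key structural lemma,'' and that lemma is false. The step that breaks is the identification $A_i \otimes_{A_0} A_1 \cong A_{i+1}$: generation of $A$ in degrees $0$ and $1$ only makes the multiplication map $A_i \otimes_{A_0} A_1 \rightarrow A_{i+1}$ \emph{surjective}, not injective (already for a classical Koszul algebra like $k[x]/(x^2)$ the map $A_1 \otimes_{A_0} A_1 \rightarrow A_2 = 0$ has a kernel). Consequently the kernel of $Q = B\otimes_{B_0}V \twoheadrightarrow P\downarrow_B^A$ is in general only a proper quotient of $\bigl(A\otimes_{A_0}V'[1]\bigr)\downarrow_B^A$, and iterating does not produce a linear resolution. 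The paper's own Example 4.4 is a counterexample to your lemma: there $A$ is projective over $A_0$, and for $V = A_0 1_x$ one computes that $\ker\bigl(B\otimes_{B_0}V \to A1_x\bigr)$ is $5$-dimensional while your $A\otimes_{A_0}V'[1]$ is $6$-dimensional (the discrepancy is the element $\gamma\beta\otimes\alpha\delta$, killed because $\gamma\beta\alpha\delta = 0$ while $\alpha\delta \neq 0$); concretely, $\Omega_B^2(A1_x\downarrow_B^A) \cong S_w[3]$ is generated in degree $3$, not degree $2$, so $A1_x\downarrow_B^A$ is not classical Koszul. What \emph{is} true, and provable by your degreewise computation, is only that the first kernel is generated in degree $1$ with semisimple top; its higher syzygies are not controlled without further input.

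This is not a repairable technicality within your framework: since restricting $M = A_0$ gives the semisimple $B$-module $B_0 \oplus \mathfrak{r}$, an unconditional version of your lemma would force $B$ to be classical Koszul for every directed category with $A$ projective over $A_0$, which Example 4.4 refutes. So the Koszulity hypotheses of the theorem ($A$ generalized Koszul in part (1), $B$ classical Koszul in part (2)) must enter the construction of the $B$-resolution itself, whereas your proof of the lemma never uses them; the Cartan--Eilenberg assembly in part (1) and the appeal to ``$K$ is classical Koszul by the lemma'' in part (2) therefore both rest on an unproved (and in general false) claim. Note also that the paper does not reprove this statement at all --- it cites Theorems 5.13 and 5.14 of \cite{Li1}, where the argument compares $\Omega_B^i(M\downarrow_B^A)$ with $\Omega_A^i(M)\downarrow_B^A$ inductively using the Koszul hypotheses rather than resolving induced projectives outright.
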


\begin{proof}
These two statements are precisely Theorems 5.13 and 5.14 in \cite{Li1}. In the original proofs we did not assume that $A_0$ is self-injective, see Remark 5.15.
\end{proof}

\begin{theorem}
Let $A$ and $B$ be defined as above.
\begin{enumerate}
\item $A$ is a generalized Koszul algebra if and only if it is a projective $A_0$-module and $B$ is a classical Koszul algebra.
\item Suppose that $A$ is a generalized Koszul algebra. Then a graded $A$-module $M$ is generalized Koszul if and only if it is a projective $A_0$-module and $M \downarrow _B^A$ is classical Koszul.
\end{enumerate}
\end{theorem}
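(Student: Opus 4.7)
The plan is to deduce both parts of Theorem 4.2 from Theorem 4.1, combined with Propositions 2.5 and 2.6 and the splitting property (S) of Section 2. Since each $\mathcal{A}(x, x)$ is a finite dimensional local algebra, $A_0 = \bigoplus_x \mathcal{A}(x, x)$ is a finite direct sum of local algebras, so $\fdim A_0 = \fdim A_0^{\op} = 0$ and (S) is available. The ``only if'' directions are then immediate: if $A$ is generalized Koszul, the observation following Proposition 2.5 yields that $A$ is a projective $A_0$-module and Theorem 4.1(1) applied to $A_0$ gives that $B$ is classical Koszul, proving (1). For (2), having (1) in hand, Proposition 2.6 forces any generalized Koszul $M$ to be projective over $A_0$, and Theorem 4.1(1) gives that $M \downarrow_B^A$ is classical Koszul.

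For the ``if'' directions I will apply Theorem 4.1(2), taking $M = A_0$ in case (1). The first hypothesis to verify in case (1) is that $A_0 \downarrow_B^A$ is classical Koszul. Since $\mathfrak{J}$ annihilates $A_0$ and $B_i \subseteq A_i \subseteq \mathfrak{J}$ for $i \geq 1$, the $B$-action on $A_0$ factors through $B_0 = \bigoplus_x k \cdot 1_x$; under this action $\mathcal{A}(x, x)$ splits as $\dim_k \mathcal{A}(x, x)$ copies of the simple $B$-module at $x$, so $A_0 \downarrow_B^A$ is a direct sum of simple $B$-modules concentrated in degree 0, which is classical Koszul whenever $B$ is. In case (2) the analogous hypothesis is part of the statement, so there is nothing to check.

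The remaining technical point, common to both parts, is that $\Omega^i(M)_i$ (respectively $\Omega^i(A_0)_i$) must be a projective $A_0$-module for every $i \geq 0$. I will establish the stronger claim: if $A$ and $M$ are both projective $A_0$-modules, then $\Omega^j(M)_i$ is a projective $A_0$-module for every $i, j \geq 0$. The key observation is that every graded projective $A$-module is a direct sum of shifts of indecomposable projectives $(A \cdot 1_x)[-s]$, so each degree piece has the form $\bigoplus A_{i-s} \cdot 1_x$; since $A$ is a projective $A_0$-module each $A_{i-s}$ is projective over $A_0$, and $A_{i-s} \cdot 1_x$ is a direct $A_0$-module summand of $A_{i-s}$, hence projective. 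Induction on $j$ now proceeds via the short exact sequence $0 \to \Omega^{j+1}(M)_i \to (P^j)_i \to \Omega^j(M)_i \to 0$ coming from a graded projective cover $P^j \to \Omega^j(M)$: two of the three terms are projective $A_0$-modules, so property (S) forces the third. This verifies all hypotheses of Theorem 4.1(2), and both ``if'' directions follow. The only real obstacle is setting up this induction so that $A_0$-projectivity is transmitted through the entire minimal projective resolution; once that is in hand, Theorem 4.1 delivers the conclusions.
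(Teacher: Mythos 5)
Your proposal is correct and takes essentially the same route as the paper: both directions are reduced to Theorem 4.1, and the only substantive verification --- that the graded pieces of the syzygies of $M$ (resp.\ $A_0$) are projective $A_0$-modules --- is carried out by the same induction along the minimal graded projective resolution, using that $A$ is a projective $A_0$-module. (One cosmetic point: in your inductive step the \emph{quotient} term is the one already known to be $A_0$-projective, so the sequence splits for the standard reason rather than via property (S); the conclusion is unaffected.)
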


\begin{proof}
This is Theorem 5.16 in \cite{Li1}, but we drop the unnecessary condition that $A_0$ is self-injective.

(1). If $A$ is a generalized Koszul algebra, then it is a projective $A_0$-module, see the paragraph after Proposition 2.5. By (1) of the previous theorem, $B$ is a classical Koszul algebra. Conversely, if $B$ is a classical Koszul algebra, then $A_0 \downarrow _B^A$ is a classical Koszul $B$-module since it is a projective $B_0$-module. Thus by (2) of the previous theorem, $A$ is a generalized Koszul algebra if we can show that $\Omega^i (A_0)_i$ is a projective $A_0$-module for each $i \geqslant 0$. We prove a stronger statement, that is, $\Omega^i(A_0)$ is a projective $A_0$-module for each $i \geqslant 0$.

Clearly, $\Omega^0 (A_0) = A_0$ is a projective $A_0$-module. Consider the exact sequence
\begin{equation*}
\xymatrix {0 \ar[r] & \Omega^{i+1} (A_0) \ar[r] & P^i \ar[r] & \Omega^i (A_0) \ar[r] & 0}.
\end{equation*}
By the induction hypothesis, $\Omega^i (A_0)$ is a projective $A_0$-module. Thus the above sequence splits as $A_0$-modules. But $P^i$ is a projective $A_0$-module since we assume that $A$ is a projective $A_0$-module, so is $\Omega^{i+1} (A_0)$. This proves (1).

(2). Since $A$ is a generalized Koszul algebra, it is a projective $A_0$-module. If $M$ is generalized Koszul, then it is a projective $A_0$-module (Proposition 2.6) and $M \downarrow _B^A$ is classical Koszul (by (1) of Theorem 4.1). Conversely, if $M \downarrow _B^A$ is classical Koszul, to prove that $M$ is generalized Koszul, by (2) of Theorem 4.1 it suffices to show that $\Omega^i (M)_i$ is a projective $A_0$-module for every $i \geqslant 0$. This can be proved by a similar induction as we just did.
\end{proof}

In practice these two theorems can be used to prove that some complicated directed categories are generalized Koszul. Compared to Theorem 3.6, they have the following disadvantages: They can only be applied to associated algebras $A$ of some directed categories. Moreover, the algebra $B$ is still complicated since we only reduce the sizes of endomorphism algebras of objects in $\mathcal{A}$, and do not remove any non-endomorphisms. Here is an example:

\begin{example}
Let $A$ be the path algebra of the following quiver with relations $\delta^2 = \rho^2 = \theta^2 = 0$, $\alpha \delta = \rho \alpha$, and $\beta \rho = \theta \beta$. Put a grading on $A$ by letting $A_0$ be the space spanned by all endomorphisms and letting $A_1 = \langle \alpha, \beta, \alpha \delta, \beta \rho \rangle$. The reader can check that $A$ is indeed a projective $A_0$-module.
\begin{equation*}
\xymatrix{ x \ar@(ld,lu)[]|{\delta} \ar[r]^{\alpha} & y \ar@(dl,dr)[]|{\rho} \ar[r]^{\beta} & z \ar@(rd,ru)[]|{\theta}}
\end{equation*}

By removing all non-identity endomorphisms, we get an algebra $B$ isomorphic to the path algebra of the following quiver with relations $\beta_2 \alpha_1 = \beta_1 \alpha_2$ and $\beta_2 \alpha_2 = 0$.
\begin{equation*}
\xymatrix{ x \ar@<0.5ex>[r]^{\alpha_1} \ar@<-0.5ex>[r]_{\alpha_2} & y \ar@<0.5ex>[r]^{\beta_1} \ar@<-0.5ex>[r]_{\beta_2} & z}
\end{equation*}

Using the procedure described before Example 3.1, we get a quotient algebra $\bar{A}$ isomorphic to the path algebra of the following quiver:
\begin{equation*}
\xymatrix{ x \ar[r]^{\alpha} & y \ar[r]^{\beta} & z}
\end{equation*}

It is not hard to check that both $B$ and $\bar{A}$ are classical Koszul algebras. Therefore, $A$ is a generalized Koszul algebra either by Theorem 3.6 or by Theorem 4.2. However, the algebra $\bar{A}$ is much simpler.
\end{example}

However, these results can be applied to some algebras $A$ which do not satisfy $\mathfrak{r} A_1 = A_1 \mathfrak{r}$, as shown by the following example.

\begin{example}
Let $A$ be the path algebra of the following quiver with relations $\delta^2 = \gamma \beta \alpha \delta = 0$.
\begin{equation*}
\xymatrix{ x \ar@(ld,lu)[]|{\delta} \ar[r]^{\alpha} & y \ar[r]^{\beta} & z \ar[r]^{\gamma} & w}.
\end{equation*}
This is not a generalized Koszul algebra. Since $\mathfrak{r} = \langle \delta \rangle$, $\mathfrak{R} = \langle \delta, \alpha \delta, \beta \alpha \delta \rangle$, and $\bar{A} = A / \mathfrak{R}$ is the path algebra of a quiver of type $A_4$, which is clearly classical Koszul. Therefore, the result of Theorem 3.6 fails. This is because $A_1 \mathfrak{r} \neq \mathfrak{r} A_1$.

On the other hand, if we use the construction described in this section, we get $B$ is isomorphic to the path algebra of the following quiver with relation $\gamma \beta \alpha' = 0$.
\begin{equation*}
\xymatrix{ x \ar@/^0.5pc/[r]^{\alpha} \ar@/_0.5pc/[r] _{\alpha'} & y \ar[r]^{\beta} & z \ar[r]^{\gamma} & w}.
\end{equation*}
It is easy to check that $B$ is not a classical Koszul algebra. Theorem 4.2 is still true for this example.
\end{example}

We end this section by introducing some possible applications of our theory to certain finite categories which are interesting to people studying finite groups and their representations. These categories are examples of \textit{finite EI categories} (see \cite{Li4, Webb1}, that is, categories with finitely many morphisms such that each endomorphism is an isomorphism.

Let $G$ be a finite group, and $\mathcal{S}$ be a set of subgroups of group $G$. The \textit{transporter category} $\mathcal{T_S}$ has elements in $\mathcal{S}$ as objects. For $H, K \in \mathcal{S}$, $\mathcal{T_S} (H, K) = N_G (H, K) = \{ g \in G \mid (^gH) \subseteq K\}$. Clearly, for each object $H \in \mathcal{S}$, $\mathcal{T_S} (H, H) = N_G(H)$ is the normalizer of $H$ in $G$. Therefore, $\mathcal{T_S}$ is a finite EI category. The \textit{orbit category} $\mathcal{O_S}$ has all cosets $G/H$, where $H \in \mathcal{S}$, as objects, and $\mathcal{O_S} (G/H, G/K)$ is the set of all $G$-equivariant maps. These categories play important roles in group theory. For details, see \cite{Webb2}.

Let $\mathcal{C}$ be a transporter category or an orbit category. By Corollaries 2.6, 2.7 and their proofs in \cite{Webb2}, $\mathcal{C} (y, y)$ acts freely on $\mathcal{C} (x, y)$ for $x, y \in \Ob \mathcal{C}$. Therefore, $\mathcal{C} (x, y)$ is the disjoint union of several orbits, each of which is isomorphic to a copy of $\mathcal{C} (y, y)$. This observation will play a crucial role in our application.

Now we put a grading on all morphisms in $\mathcal{C}$. It would be reasonable to let $\mathcal{C}_0$ be the set of all endomorphisms in $\mathcal{C}$, which is a direct sum of several finite groups. The set of all non-endomorphisms in $\mathcal{C}$ is closed under the action of $\mathcal{C}_0$. Moreover, from \cite{Li4} we know that every non-endomorphism can be factorized as a composite of several \textit{unfactorizable morphisms} (see Definition 2.3 and Proposition 2.5 in \cite{Li4}), and the set of all unfactorizable morphisms is closed under the action of $\mathcal{C}_0$ (Proposition 2.6 in \cite{Li4}). Therefore, we can let $\mathcal{C}_1$ be the set of all unfactorizable morphisms.

It is subtle to define $\mathcal{C}_2$. We cannot define $\mathcal{C}_2$ to be the set of all composites $\beta \alpha$ where $\alpha, \beta \in \mathcal{C}_1$, since it is possible that $\beta \alpha$ can be written as a composite of more than two morphisms in $\mathcal{C}_1$. Therefore, we define $\mathcal{C}_2$ to be the set of morphisms each of which can be expressed as a composite of two unfactorizable morphisms, but cannot be expressed as a composite of more than two morphisms. Similarly, we define $\mathcal{C}_3$ be the set of all morphisms each of which can be expressed as a composite of three unfactorizable morphisms, but cannot be expressed as a composite of more than three morphisms, and so on. Note that since $\mathcal{C} (y, y)$ acts freely on $\mathcal{C} (x, y)$ for $x, y \in \Ob \mathcal{C}$, each $\mathcal{C}_i$ is invariant under the action of $\mathcal{C}_0$.

The category algebra $k\mathcal{C}$ as a vector space has a basis all morphisms in $\mathcal{C}$. The product of two basis elements is their composite (when this is defined in $\mathcal{C}$) or 0 (otherwise). This algebra in general is not graded. However, note that all non-endomorphisms span a two-sided ideal $J$ of $k\mathcal{C}$. Moreover, $k\mathcal{C} / J \cong k\mathcal{C}_0$. Therefore, using this two-sided ideal, we get an \textit{associated graded algebra} $A = \bigoplus _{i \geqslant 0} A_i$ with $A_0 = k\mathcal{C}_0$, a direct sum of group algebras, and $A_i = J^i/J^{i+1}$ for $i \geqslant 1$.

The following proposition tells us that our generalized Koszul theory can apply to the graded algebra $A$ associated to the category algebra $k\mathcal{C}$.

\begin{proposition}
Notation as above, we have:
\begin{enumerate}
\item $\mathcal{C}_i$ is a basis of $A_i$ for each $i \geqslant 0$;
\item $A$ is a projective $A_0$-module;
\item $\bar{A} = A / A \mathfrak{r} A$ is Morita equivalent to a quotient algebra of a finite dimensional hereditary algebra.
\end{enumerate}
\end{proposition}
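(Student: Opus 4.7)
For (1), the plan is to show $J^i = \mathrm{span}_k\bigl(\bigsqcup_{j \geq i} \mathcal{C}_j\bigr)$ for every $i \geq 0$, from which the basis claim for $A_i = J^i/J^{i+1}$ follows at once. Passing to a skeleton if necessary, assume $\mathcal{C}$ is skeletal. The inclusion $\supseteq$ is immediate from the definition: any $f \in \mathcal{C}_j$ with $j \geq i$ is a composite of $j$ unfactorizable (hence non-endomorphism) morphisms, each in $J$, so $f \in J^j \subseteq J^i$. For $\subseteq$, in a skeletal finite EI category the composite of two non-endomorphisms (when defined) is again a non-endomorphism; otherwise the EI property would force an isomorphism between two distinct objects of the skeleton. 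Hence any nonzero product of $i$ non-endos is itself a non-endo admitting (by Proposition 2.5 of \cite{Li4}) a factorization into at least $i$ unfactorizables, and so lies in $\mathcal{C}_j$ for some $j \geq i$.

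For (2), I aim to prove each $A_i$ is a projective left $A_0$-module. Decompose $A_i$ as a left $A_0$-module into summands $k\mathcal{C}_i(x,y) := \mathrm{span}_k(\mathcal{C}_i \cap \mathcal{C}(x,y))$ indexed by object pairs; only the block $k\mathcal{C}(y,y)$ of $A_0$ acts nontrivially on the $(x,y)$-summand via composition on the left, so it suffices to exhibit each $k\mathcal{C}_i(x,y)$ as a free $k\mathcal{C}(y,y)$-module. The key claim is that $\mathcal{C}_i(x,y)$ is invariant under the $\mathcal{C}(y,y)$-action on $\mathcal{C}(x,y)$: given $h \in \mathcal{C}(y,y)$ and $f = f_i \cdots f_1 \in \mathcal{C}_i(x,y)$, the identity $hf = (hf_i) f_{i-1} \cdots f_1$, combined with the $\mathcal{C}_0$-closedness of the set of unfactorizable morphisms (Proposition 2.6 of \cite{Li4}), writes $hf$ as a composite of $i$ unfactorizables; and any factorization of $hf$ into strictly more than $i$ unfactorizables would, after applying $h^{-1}$ (invertible by EI), descend to one of $f$, contradicting $f \in \mathcal{C}_i$. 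The already free $\mathcal{C}(y,y)$-action on $\mathcal{C}(x,y)$ therefore restricts to a free action on $\mathcal{C}_i(x,y)$, so $k\mathcal{C}_i(x,y)$ is a free $k\mathcal{C}(y,y)$-module and hence a projective $A_0$-module.

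For (3), the quotient $\bar{A}_0 = A_0/\mathfrak{r}$ is semisimple, and $\bar{A}$ is a finite-dimensional graded algebra generated in degrees 0 and 1. Pass to a basic algebra $B$ Morita equivalent to $\bar{A}$: its degree-zero part $B_0$ is a product of copies of $k$ indexed by pairs $(y, V)$ with $V$ a simple $k\mathcal{C}(y,y)$-module, and standard theory presents $B$ as a quotient of the path algebra $kQ$ of the quiver $Q$ whose vertices are these pairs and whose arrows from $(x,U)$ to $(y,V)$ form a basis of $e_{(y,V)} B_1 e_{(x,U)}$. The decisive step is that $Q$ is acyclic: any such arrow forces $e_{(y,V)}\bar{A}_1 e_{(x,U)} \neq 0$, and since $\bar{A}_1$ is a quotient of $A_1 = k\mathcal{C}_1$ (unfactorizable non-endomorphisms), this forces $x \neq y$ and $\mathcal{C}(x,y) \neq 0$; EI together with skeletality then gives $x < y$ in the object partial order, so any oriented path in $Q$ strictly increases the object coordinate, precluding cycles. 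Hence $kQ$ is a finite-dimensional hereditary $k$-algebra and $\bar{A}$ is Morita equivalent to the quotient $B$ of $kQ$.

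The subtlest ingredients are the $\mathcal{C}_0$-invariance of $\mathcal{C}_i$ used in (2) and the acyclicity of $Q$ in (3), both of which rest critically on the EI property (invertibility of endomorphisms) and on being able to work in a skeletal model of $\mathcal{C}$. If $\mathcal{C}$ is not already skeletal (e.g.\ a transporter category whose set $\mathcal{S}$ contains conjugate subgroups), I would replace it at the outset by an equivalent skeleton, noting that all three statements are invariant under this passage. Once this is in place, the remainder is bookkeeping using the tools of \cite{Li4, Webb2}.
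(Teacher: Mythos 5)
Your proposal is correct and follows essentially the same route as the paper: (1) by identifying $J^i$ with the span of $\bigsqcup_{j\geqslant i}\mathcal{C}_j$, (2) by decomposing $A_i$ into the pieces $k\mathcal{C}(x,y)_i$ on which $k\mathcal{C}(y,y)$ acts freely, and (3) by using the directedness of the skeletal EI category to realize the basic algebra of $\bar{A}$ as a quotient of the path algebra of an acyclic quiver (the paper phrases this via Hom-vanishing between the indecomposable projectives $\bar{Q}_i$ and $\End_{\bar{A}}(\bar{Q}_i)\cong k$, which is the same directedness argument). You merely make explicit some details the paper leaves implicit, such as the $\mathcal{C}_0$-invariance of each $\mathcal{C}_i$ and the construction of the Gabriel quiver.
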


\begin{proof}
Clearly, for each $i \geqslant 1$, $J^i$ (resp., $J^{i+1})$ is spanned by all morphisms in $\mathcal{C}$ which can be expressed as a composite of $n \geqslant i$ (resp., $n \geqslant i+1$) unfactorizable morphisms. Therefore, the quotient $A_i  = J^i / J^{i+1}$ is spanned by all morphisms which can be expresses as a composite of $i$ unfactorizable morphisms, but can not be expressed as a composite of more than $i$ unfactorizable morphisms. Therefore, $A_i$ is precisely spanned by elements in $\mathcal{C}_i$, which proves (1).

It is obvious that $A_0$ is a projective $A_0$-module. For $i \geqslant 1$, we have a decomposition $\mathcal{C}_i = \sqcup _{x, y \in \Ob \mathcal{C}} \mathcal{C} (x, y)_i$, where $\mathcal{C} (y, y)$ acts freely on $\mathcal{C} (x, y)_i$. Therefore, $k\mathcal{C} (x, y)_i$ is a free $k\mathcal{C} (y, y)$-module, so it is a projective $k\mathcal{C}_0 = A_0$-module. Consequently, $A_i = J^i / J^{i+1} = k\mathcal{C}_i = \oplus _{x, y \in \Ob \mathcal{C}} k\mathcal{C} (x, y)_i$ is a projective $A_0$-module. Thus (2) holds.

Now we prove (3). Take a representative $P_i$ from each isomorphism class of indecomposable projective $k\mathcal{C}$-module, $1 \leqslant i \leqslant n$. Since $\mathcal{C}$ is a finite EI category and these projective modules are pairwise non-isomorphic, there is a partial order $\preccurlyeq$ on $\{ i \} _{i=1}^n$ such that $\Hom _{k\mathcal{C}} (P_i, P_j) = 0$ if $i \npreceq j$. Correspondingly, there is a set of indecomposable projective $A$-modules $\{ Q_i \} _{i=1}^n$, one from each isomorphism class, such that $\Hom _A (Q_i, Q_j) = 0$ if $i \npreceq j$. By taking quotients, $\bar{A}$ has a set of indecomposable projective modules $\{ \bar{Q}_i \} _{i=1}^n$, one from each isomorphism class, satisfying the same condition. Also note that $\End _{\bar{A}} (\bar{Q}_i, \bar{Q}_i) \cong k$ for $1 \leqslant i \leqslant n$. Therefore, the endomorphism algebra of $\oplus _{i=1}^n \bar{Q}_i$ is a quotient algebra of a finite hereditary algebra, which is Morita equivalent to $\bar{A}$.
\end{proof}

As explained in the following example, the correspondence described in Theorem 3.6 indeed gives us a feasible way to show a complicated category or algebra to be generalized Koszul.

\begin{example}
Let $G = C_6$ be a cyclic group of order 6, and let $\mathcal{S}$ be the set of 4 subgroups: $x = 1$, $w = C_6$, $y = C_2$, and $z = C_3$. The structure of the transporter category $\mathcal{T_S}$ is pictured as below:
\begin{equation*}
\xymatrix{ & w \ar@(ul,ur)[]|{G_w} & \\
y \ar@(ld,lu)[]|{G_y} \ar[ur] ^{B_{yw}} & & z \ar@(rd,ru)[]|{G_z} \ar[ul] _{B_{zw}} \\
 & x \ar@(dl,dr)[]|{G_x} \ar[ur] _{B_{xz}} \ar[ul] ^{B_{xy}}}
\end{equation*}
By computation, we find $G_x \cong G_z \cong G_w \cong G_y \cong C_6$. Moreover, the four bisets $B_{xz} \cong B_{xy} \cong B_{yw} \cong B_{zw} \cong C_6$. Note that we also have $B_{yw} B_{xy} = B_{zw} B_{xz}$ since they both equal the set of morphisms from $x$ to $w$, which is also isomorphic to a copy of $C_6$. This category can be graded in an obvious way. Let $A$ be the graded category algebra. We find $A_1\mathfrak{r} = \mathfrak{r} A_1$.

The $k$-linear representations of this category (which are the same as $k\mathcal{T_S}$-modules) depends on the characteristic of $k$. Let us do the computation for an algebraically closed field of characteristic $3$. Then $kC_6$ is the direct sum of two non-isomorphic indecomposable projective modules. An explicit computation tells us that $\bar{A}$ is a direct product of two graded algebras, each being isomorphic to the graded incidence algebra of the following poset with length grading:
\begin{equation*}
\xymatrix{ & \bullet & \\
\bullet \ar[ur] & & \bullet \ar[ul] \\
 & \bullet \ar[ur] \ar[ul]}
\end{equation*}
Since this graded incidence algebra is classical Koszul, by Theorem 3.6, the category algebra $k\mathcal{C}$ is generalized Koszul.
\end{example}

However, we do not know for what transporter categories or orbit categories $\mathcal{C}$, the category algebras $k\mathcal{C}$ or the associated graded algebras $A$ are generalized Koszul.

\section{Applications to standardly stratified algebras}

Let $A$ be a basic finite-dimensional algebra whose simple modules $S_{\lambda}$ (up to isomorphism) are indexed by a preordered set $(\Lambda, \leqslant)$. This preordered set also indexes all indecomposable projective $A$-modules $P_{\lambda}$ up to isomorphism. According to \cite{Cline}, $A$ is \textit{standardly stratified} with respect to $\leqslant$ if there exist modules $\Delta_{\lambda}$, $\lambda \in \Lambda$, such that the following conditions hold:
\begin{enumerate}
\item the composition factor multiplicity $[\Delta_{\lambda} : S_{\mu}] = 0$ whenever $\mu \nleqslant \lambda$;
and
\item for every $\lambda \in \Lambda$ there is a short exact sequence $0 \rightarrow K_{\lambda} \rightarrow P_{\lambda} \rightarrow \Delta_{\lambda} \rightarrow 0$ such that $K_{\lambda}$ has a filtration with factors $\Delta_{\mu}$ where $\mu > \lambda$.
\end{enumerate}
The \textit{standard module} $\Delta_{\lambda}$ is the largest quotient of $P_{\lambda}$ with only composition factors $S_{\mu}$ such that $\mu \leqslant \lambda$. It has the following description:
\begin{equation*}
\Delta_{\lambda} = P_{\lambda} / \sum _{\mu \nleqslant \lambda} \text{tr} _{P_{\mu}} (P_{\lambda}),
\end{equation*}
where tr$_{P_{\mu}} (P_{\lambda})$ is the trace of $P_{\mu}$ in $P _{\lambda}$ (\cite{Dlab, Webb2}).

Throughout this section we suppose that $A$ is standardly stratified with respect to a partial order $\leqslant$. Let $\Delta$ be the direct sum of all standard modules and $\mathcal{F} (\Delta)$ be the full subcategory of $A$-mod such that each object in $\mathcal{F} (\Delta)$ has a filtration by standard modules. For $M \in \mathcal{F} (\Delta)$ and $\lambda \in \Lambda$, we take a particular $\Delta$-filtration $\xi$ and define the multiplicity $[M: \Delta_{\lambda}]$ to be the number of factors in $\xi$ isomorphic to $\Delta_{\lambda}$. It is well known that the multiplicity is independent of the choice of a particular filtration.

Since standard modules are relative simple in the category $\mathcal{F} (\Delta)$ and have finite projective dimensions, the extension algebra $\Gamma = \Ext _A^{\ast} (\Delta, \Delta)$ of standard modules is a graded finite-dimensional algebra, and provides us a lot information on the structures of indecomposable objects in $\mathcal{F} (\Delta)$. The structure of $\Gamma$ has been considered in \cite{Abe, Agoston1, Agoston2, Drozd, Klamt, Li2, Madsen3, Mazorchuk1, Mazorchuk3, Mazorchuk4, Miemietz}. However, most authors assumed that $A$ has a grading such that $A_0$ is semisimple, and this grading is compatible with the $\Delta$-filtration. That is, the top of each standard module and the socle of each costandard module lie in degree 0. Under this hypothesis, and using some extra assumptions, they show that the Koszul dual algebra is also standardly stratified, the Ringel dual algebra is also Koszul if so is $A$, and the Ringel duality and the Koszul duality commute.

In this paper we study the extension algebras of standard modules from a different approach. In general we do not assume that the standardly stratified algebra is graded. Indeed, since the extension algebra has a natural grading, and its degree 0 part, which is exactly the endomorphism algebra of standard modules, is in general not semisimple, our generalized Koszul theory can apply to this situation. Actually, if the $\Delta$-filtration of indecomposable projective modules has nice combinatorial property, we can still get certain Koszul property for the extension algebra.

We introduce some notations. Let $\Lambda_1$ be the subset of all minimal elements in $\Lambda$, $\Lambda_2$ be the subset of all minimal elements in $\Lambda \setminus \Lambda_1$, and so on. Then $\Lambda = \sqcup_{i \geqslant 1} \Lambda_i$. With this partition, we can introduce a \textit{height function} $h: \Lambda \rightarrow \mathbb{N}$ in the following way: for $\lambda \in \Lambda_i \subseteq \Lambda$, $i \geqslant 1$, we define $h(\lambda) = i$. For each $M \in \mathcal{F} (\Delta)$, we define supp$(M)$ to be the set of elements $\lambda \in \Lambda$ such that $[M: \Delta_{\lambda}] \neq 0$. For example, supp$ (\Delta_{\lambda}) = \{ \lambda \}$. We also define $\min (M) = \min (\{ h(\lambda) \mid \lambda \in \text{supp} (M) \})$. We say $M$ is \textit{generated in height $i$} if every simple summand of $M / \rad M$ is isomorphic to some $S_{\lambda}$ with $h (\lambda) = i$. For example, the standard module $\Delta_{\lambda}$ is generated in height $h(\lambda)$. Let $\Gamma = \Ext_A^{\ast} (\Delta, \Delta)$.

The following definition is an analogue of generalized Koszul modules of graded algebras.

\begin{definition}
An $A$-module $M \in \mathcal{F} (\Delta)$ is said to be linearly filtered if $M$ is generated in certain height $i \geqslant 0$ and has a projective resolution
\begin{equation*}
\xymatrix{ 0 \ar[r] & Q^l \ar[r] & Q^{l-1} \ldots \ar[r] & Q^{i+1} \ar[r] & Q^i \ar[r] & M \ar[r] & 0}
\end{equation*}
such that each $Q^s$ is generated in height $s$, $i \leqslant s \leqslant l$.
\end{definition}

With this terminology, we have:

\begin{theorem}
Suppose that $\Delta \cong \Gamma_0$ as a $\Gamma_0$-module and $\Delta_{\lambda}$ is linearly filtered for each $\lambda \in \Lambda$. Then:
\begin{enumerate}
\item $\Gamma$ is a projective $\Gamma_0$-module.
\item If $M \in \mathcal{F} (\Delta)$ is linearly filtered, then $\Ext _A^{\ast} (M, \Delta)$ is a generalized Koszul $\Gamma$-module.
\item In particular, $\Gamma$ is a generalized Koszul algebra.
\end{enumerate}
\end{theorem}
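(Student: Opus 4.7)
Denote $E = \Ext_A^{\ast}(-, \Delta)$; the graded $\Gamma$-module structure on $E(M)$ comes from Yoneda composition. The plan is to use the linearly filtered resolutions of the $\Delta_\lambda$'s (and of $M$ in the general case) to produce explicit linear projective resolutions of each $E(M)$ over $\Gamma$. The central observation is that for any indecomposable projective $A$-module $P_\nu$ one has $\Hom_A(P_\nu, \Delta) \cong e_\nu \Delta$ and $\Ext_A^i(P_\nu, \Delta) = 0$ for $i \geqslant 1$. The hypothesis $\Delta \cong \Gamma_0$ as a $\Gamma_0$-module guarantees that $e_\nu \Delta$ is a direct summand of $\Gamma_0$, hence a projective $\Gamma_0$-module. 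Consequently, for any linearly filtered $M \in \mathcal{F}(\Delta)$ with resolution $\cdots \to Q^s_M \to \cdots \to Q^0_M \to M \to 0$ and $Q^s_M = \bigoplus P_\nu^{m_{\nu,s}}$ (summed over $\nu$ of height $s + i_M$, where $i_M$ is the generating height of $M$), $\Hom_A(Q^s_M, \Delta)$ is a projective $\Gamma_0$-module.

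I would prove (1) and (2) together by induction along the syzygy chain. First, I would establish the height-graded analog of Lemma 2.2: if $M$ is linearly filtered and generated in height $i$, then $\Omega M$ is linearly filtered and generated in height $i+1$. Then apply $E$ to the short exact sequence $0 \to \Omega M \to Q^0_M \to M \to 0$. The long exact sequence, combined with $\Ext_A^i(Q^0_M, \Delta) = 0$ for $i \geqslant 1$, yields
\[
0 \to E(M)_0 \to \Hom_A(Q^0_M, \Delta) \to E(\Omega M)_0 \to E(M)_1 \to 0
\]
together with isomorphisms $E(\Omega M)_{i-1} \cong E(M)_i$ for $i \geqslant 2$. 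Iterating and splicing across $s$ produces a linear projective resolution of $E(M)$ over $\Gamma$ whose $s$-th term is $\Gamma \otimes_{\Gamma_0} \Hom_A(Q^s_M, \Delta)$, generated in degree $s$. This verifies that $E(M)$ is generalized Koszul, and taking $M = \Delta_\lambda$ and summing over $\lambda$ gives $\Gamma = E(\Delta)$ projective over $\Gamma_0$.

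For (3) I would argue that $\Gamma_0$ is generalized Koszul as a $\Gamma$-module using the canonical short exact sequence $0 \to \mathfrak{J}\Gamma \to \Gamma \to \Gamma_0 \to 0$: by (1), $\Gamma$ is projective over $\Gamma_0$, and combining this with the explicit resolutions from (2) shows that $\mathfrak{J}\Gamma[-1]$ is generalized Koszul (its $\Gamma$-linear resolution is the shifted concatenation of the syzygy resolutions produced in (2)). Proposition 2.4, applied in reverse to the above sequence with $L = \mathfrak{J}\Gamma$ and the trivially generalized Koszul module $\Gamma$, then yields that $\Gamma_0$ is generalized Koszul over $\Gamma$, i.e., $\Gamma$ is a generalized Koszul algebra. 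The main obstacle will be verifying the surjectivity of the Yoneda map $\Gamma_s \otimes_{\Gamma_0} \Hom_A(M, \Delta) \to \Ext_A^s(M, \Delta)$ --- equivalently, that $E(M)$ is generated in degree $0$ as a $\Gamma$-module --- which is the height-filtered analog of the ``generating in degree $0$'' step in the proof of Theorem 1.1 and requires a careful lifting argument using the projectivity of each $\Hom_A(Q^s_M, \Delta)$ over $\Gamma_0$; the auxiliary height-graded version of Lemma 2.2 is the other technical lemma that must be proved first.
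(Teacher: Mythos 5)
The paper's own ``proof'' of this theorem is essentially a citation: parts (2) and (3) are quoted from Theorem 2.12 of \cite{Li2} (with $\Theta = \Delta$ and $Q = {}_AA$), and part (1) from display (2.2) there. You are therefore attempting a self-contained argument that the paper does not supply, and several of your ingredients are sound: $\Hom_A(P_\nu, \Delta) \cong e_\nu \Delta$ is a direct summand of $\Delta \cong \Gamma_0$ and hence $\Gamma_0$-projective, the vanishing $\Ext_A^i(P_\nu, \Delta) = 0$ for $i \geqslant 1$ is correct, and so are your four-term exact sequence and the dimension-shifting isomorphisms. Note also that (3) follows from (2) at once by taking $M$ to be the modules $P_\lambda$ (each trivially linearly filtered), since $E({}_AA) = \Hom_A(A, \Delta) \cong \Delta \cong \Gamma_0$ as a graded $\Gamma$-module concentrated in degree $0$; your detour through $\mathfrak{J}\Gamma$ and Proposition 2.4 is both unnecessary and problematic, because Proposition 2.4 requires the submodule term to be generalized Koszul, hence generated in degree $0$, which $\mathfrak{J}\Gamma$ is not.

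The central construction, however, does not work. Since $E = \Ext_A^{\ast}(-, \Delta)$ is contravariant, applying $\Hom_A(-, \Delta)$ to the linear projective resolution $\cdots \to Q^1 \to Q^0 \to M \to 0$ produces a \emph{cochain} complex of projective $\Gamma_0$-modules whose cohomology computes the graded pieces $E(M)_s$; it does not produce a projective resolution of the $\Gamma$-module $E(M)$. Indeed, in your four-term sequence the natural map points the wrong way: $E(M)_0$ embeds into $\Hom_A(Q^0_M, \Delta)$, and there is no natural surjection $\Gamma \otimes_{\Gamma_0} \Hom_A(Q^0_M, \Delta) \to E(M)$. The case $M = \Delta_\lambda$ already refutes the claimed shape of the resolution: $E(\Delta_\lambda)$ is a projective $\Gamma$-module (the summand of $\Gamma = E(\Delta)$ cut out by the idempotent $1_{\Delta_\lambda}$), so its minimal resolution has length $0$ and degree-$0$ part $\Hom_A(\Delta_\lambda, \Delta)$, whereas your recipe yields a resolution of length $\pd_A \Delta_\lambda$ whose zeroth term $\Gamma \otimes_{\Gamma_0} e_\lambda \Delta$ has degree-$0$ part $\Hom_A(P_\lambda, \Delta)$, which is strictly larger whenever some homomorphism $P_\lambda \to \Delta$ fails to factor through $\Delta_\lambda$. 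The correct mechanism, as in the proof of Theorem 2.10 and in \cite{Li2}, builds the $\Gamma$-projective resolution of $E(M)$ from the height layers of the $\Delta$-filtration of $M$ (the analogue of $M_0, \mathfrak{J}M, \mathfrak{J}^2M, \ldots$), the $s$-th term being a sum of shifted projectives $E(\Delta_\mu)[s]$ indexed by the $\Delta$-factors of $M$ in height $i + s$; the surjectivity and exactness needed there are exactly the points you defer. So the gap is not merely a postponed lemma: the resolution into which your lifting argument would feed is the wrong one.
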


\begin{proof}
The last two statements come from Theorem 2.12 in \cite{Li2} by letting $\Theta = \Delta$ and $Q = _AA$. Thus we only need to show the first statement. But by (2.2) in page 14 of \cite{Li2}, we know that $\Gamma_s = \bigoplus _{\lambda \in \Lambda} \Ext _A^s (\Delta_{\lambda}, \Delta)$ is a projective $\Gamma_0$-module for every $s \geqslant 1$.
\end{proof}

We remind the reader that although $\Gamma$ is a generalized Koszul algebra, the Koszul duality in general does not hold since $\fdim \Gamma_0$ might be nonzero. See Example 2.14 in \cite{Li2}.

As before, let $\mathfrak{r} = \rad \Gamma_0$ and $\bar{\Gamma} = \Gamma / \Gamma \mathfrak{r} \Gamma$. Then we have an immediate corollary.

\begin{corollary}
Suppose that $\Delta \cong \Gamma_0$ as a $\Gamma_0$-module and $\Delta_{\lambda}$ is linearly filtered for each $\lambda \in \Lambda$. If $\Gamma_1 \mathfrak{r} = \mathfrak{r} \Gamma_1$, then the quotient algebra $\bar{\Gamma}$ is a classical Koszul algebra.
\end{corollary}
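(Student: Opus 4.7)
The plan is simply to reduce to Theorem 3.6 applied to the graded algebra $\Gamma$ in place of $A$. Recall that Theorem 3.6 does \emph{not} require the splitting property $\fdim \Gamma_0 = \fdim \Gamma_0^{\op} = 0$ (which is fortunate, since as the paper cautions just before the corollary, this may fail for the extension algebra $\Gamma$). What Theorem 3.6 does require of the graded algebra is three things: that $\Gamma$ be a projective $\Gamma_0$-module, that $\mathfrak{r}\Gamma_1 = \Gamma_1\mathfrak{r}$, and that $\Gamma$ be a generalized Koszul algebra. Once these three inputs are verified, the theorem instantly yields that $\bar{\Gamma} = \Gamma/\Gamma\mathfrak{r}\Gamma$ is a classical Koszul algebra, which is the conclusion sought.

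First, I would note that the commutation hypothesis $\Gamma_1\mathfrak{r} = \mathfrak{r}\Gamma_1$ is given as an assumption of the corollary, so that condition is immediate. Second, the hypotheses $\Delta \cong \Gamma_0$ as a $\Gamma_0$-module and $\Delta_{\lambda}$ linearly filtered for each $\lambda$ are precisely the hypotheses of Theorem 5.2, so I may invoke its conclusions freely: in particular, part (1) of Theorem 5.2 states that $\Gamma$ is a projective $\Gamma_0$-module, and part (3) states that $\Gamma$ is a generalized Koszul algebra. Thus all three hypotheses of Theorem 3.6 are satisfied.

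Applying Theorem 3.6 with $A$ replaced by $\Gamma$, the \emph{in particular} clause of that theorem gives that $\Gamma$ is a generalized Koszul algebra if and only if $\bar{\Gamma}$ is a classical Koszul algebra. Since we have already established that $\Gamma$ is a generalized Koszul algebra, we conclude that $\bar{\Gamma}$ is classical Koszul, as desired.

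There is essentially no obstacle here; the corollary is a direct packaging of Theorem 5.2 and Theorem 3.6, and the only thing a reader might want to see spelled out is that the commutation hypothesis $\Gamma_1\mathfrak{r} = \mathfrak{r}\Gamma_1$ is exactly what is needed to translate the generalized Koszul property into the classical Koszul property via the quotient construction $\Gamma \mapsto \bar{\Gamma}$. The proof should therefore be no more than a few lines, citing Theorems 5.2 and 3.6 in sequence.
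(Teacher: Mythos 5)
Your proposal is correct and follows exactly the paper's own (one-line) proof: invoke Theorem 5.2 to get that $\Gamma$ is a projective $\Gamma_0$-module and a generalized Koszul algebra, then apply the ``in particular'' clause of Theorem 3.6 (which needs no splitting hypothesis on $\Gamma_0$) together with the assumed commutation $\Gamma_1\mathfrak{r} = \mathfrak{r}\Gamma_1$ to conclude that $\bar{\Gamma}$ is classical Koszul. Your version merely spells out the verification of the hypotheses that the paper leaves implicit.
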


\begin{proof}
The conclusion follows from Theorems 5.2 and 3.6.
\end{proof}

\begin{example}
Let $A$ be the path algebra of the following quiver with relations $\delta^2 = \delta \alpha = \beta \delta = \beta \alpha = \gamma \beta =0$. Let $x > z > y$.
\begin{equation*}
\xymatrix{ x \ar[dr] ^{\alpha} & & z \ar[ll] _{\gamma}\\
& y \ar[ur] ^{\beta} \ar@(dl,dr)[]|{\delta} }
\end{equation*}
Indecomposable projective modules and standard modules of $A$ are described below:
\begin{align*}
& P_x = \begin{matrix} x \\ y \end{matrix} \quad P_y = \begin{matrix} & y & \\ y & & z \end{matrix} \quad P_z = \begin{matrix} z \\ x \\ y \end{matrix} \\
& \Delta_x = P_x = \begin{matrix} x \\ y \end{matrix} \quad \Delta_y = \begin{matrix} y \\ y \end{matrix} \quad \Delta_z = z.
\end{align*}
Clearly, $A$ is standardly stratified. Moreover, all standard modules have projective dimension 1 and are linearly filtered. By direct computation we check that $\Delta \cong \End _A (\Delta)$ as $\Gamma_0 = \End _A (\Delta)$-modules.

Now we compute the extension algebra $\Gamma$: $\Gamma_s = 0$ for $s \geqslant 2$; $\Ext _A^1 (\Delta_x, \Delta) = 0$; $\Ext _A^1 (\Delta_y, \Delta) \cong \End_A (\Delta_z)$; and $\Ext _A^1 (\Delta_z, \Delta) \cong \End_A (\Delta_x)$. Therefore, we find $\Gamma$ is the path algebra of the following quiver with relations $\delta^2 = \beta \delta = \alpha \delta = \gamma \beta =0$.
\begin{equation*}
\xymatrix{ x & & z \ar[ll] _{\gamma}\\
& y \ar[ur] ^{\beta} \ar[ul] ^{\alpha} \ar@(dl,dr)[]|{\delta} }
\end{equation*}
\end{example}

We remind the reader that $\alpha$ is in the degree 0 part of $\Gamma$. Indeed, $\Gamma_0 = \langle 1_x, 1_y, 1_z, \delta, \alpha \rangle$ and $\Gamma_1 = \langle \beta, \gamma \rangle$. In this case $\fdim \Gamma_0 \neq 0$ since there is a non-split exact sequence:
\begin{equation*}
0 \rightarrow  x \rightarrow \begin{matrix} & y & \\ y & & x \end{matrix} \rightarrow \begin{matrix} y \\ y \end{matrix} \rightarrow 0.
\end{equation*}

Since $\mathfrak{r} = \rad \Gamma_0 = \langle \delta, \alpha \rangle$, and $\mathfrak{r} \Gamma_1 = \Gamma_1 \mathfrak{r} = 0$, the quotient algebra $\Gamma$ is the path algebra of the following quiver with relation $\gamma \beta = 0$, which is clearly a classical Koszul algebra.
\begin{equation*}
\xymatrix {x & z \ar[l]_{\gamma} & y \ar[l] _{\beta}}.
\end{equation*}

\end{document}